\numberwithin{equation}{section}
\numberwithin{figure}{section}
\newtheorem{thm}{Theorem}[section]
\newtheorem{lem}[thm]{Lemma}
\newtheorem{prop}[thm]{Proposition}
\newcommand{\argmin}{\operatorname{argmin}}
\newcommand{\Rset}{\mathbb{R}}
\newcommand{\mc}{\mathcal}
\newcommand{\nl}{\left\|}
\newcommand{\nr}{\right\|}
\newcommand{\norm}[1]{\left\Vert #1 \right\Vert}
\newcommand{\bfx}{\mathbf{x}}
\newcommand{\bfy}{\mathbf{y}}
\newcommand{\FPi}{F_{\rm 4Pi}}
\newcommand{\pconf}{h}
\newcommand{\pfourPi}{p}
\newcommand{\Lip}{L}
\newcommand{\Xspace}{\mc{X}}
\newcommand{\Yspace}{\mc{Y}}
\begin{document}

\title{The Iteratively Regularized Gau{\ss}-Newton Method with Convex Constraints 
and Applications in 4Pi-Microscopy}
\author{Robert St\"uck, Martin Burger and Thorsten Hohage}
\date{\empty}

\maketitle
\abstract{This paper is concerned with the numerical solution of nonlinear ill-posed operator
equations involving convex constraints. We study a Newton-type method which
consists in applying linear Tikhonov regularization with convex constraints to the Newton
equations in each iteration step. Convergence of this iterative regularization method is analyzed 
if both the operator and the right hand side are given with errors and all error levels
tend to zero. Our study has been motivated by the joint estimation of object and phase
in 4Pi microscopy, which leads to a semi-blind deconvolution problem with non\-ne\-ga\-ti\-vi\-ty
constraints. The performance of the proposed algorithm is illustrated both for  simulated and for three-dimensional experimental data.}

\section{Introduction}
In this paper we present and analyze a Newton-type regularization method for nonlinear ill-posed
operator equations with convex constraints. More specifically, let 
$\Xspace$ and $\Yspace$ be Hilbert spaces, $\mc{C}\subset\Xspace$ a nonempty, closed convex set, 
and $F:\mc{C}\to \Yspace$ a ``forward''  operator, which we assume to be Gateaux differentiable. 
We consider the inverse problem of reconstructing $x^{\dagger}$ in the operator equation 
\begin{eqnarray}\label{eq-op}
F(x^{\dagger}) = g,\qquad x^{\dagger}\in\mc{C}
\end{eqnarray}
if only noisy versions of both $F$ and $g$ are given. Moreover, we aim to 
prove convergence of such reconstructions as the noise levels tend to zero. 

An inverse problem for which it is particularly important to properly incorporate a convex 
constraint into the inversion scheme arises in a confocal fluorescence microscopy technique (cf.\ \cite{Pawley})
called 4Pi microscopy. 
This technique was suggested and developed by Hell et.al.\ \cite{hell-fund-imp,Hell-4Pi} 
and allows for a substantial enhancement of resolution using interference of two laser beams in 
the microscopic focus and/or interference of fluorescence photons on the detector. 
In standard confocal microscopy the relation between the unknown fluorescent
marker density $f\in L^2(\mathbb{R}^3)$ of the specimen 
and the measured intensity $g$ is given
by a convolution with a \emph{point spread function} (psf) 
$\pconf \in L^1(\Rset^3)$, which is often modeled as a Gaussian function:
\begin{align}\label{eq:conf_conv}
g(\bfx) = \int \pconf (\bfx-\bfy) f(\bfy)\,d\bfy
\end{align}
The width of $\pconf $ is typically much larger along the 
so-called optical axis (which we assume to be the $x_3$-axis) than in 
directions perpendicular to the optical axis. 

4Pi microscopy allows an increase of resolution along the optical axis by
a factor of 3--7 using interference of coherent photons through two opposing 
objective lenses. Here the psf is no longer spatially invariant in general, 
but depends on the relative phase $\phi(\bfx)$ of the interfering photons,
which has to be recovered together with the fluorophore density $f$ in general
since it depends on the refractive index of the specimen which is unknown. 
The imaging process can be modeled by an operator equation 
$\FPi(f,\phi) = g$ with a forward operator of the form 
\begin{align}\label{eq-4Pi-op}
\FPi(f,\phi)(\bfx):=\int \pfourPi(\mathbf{y-x},\phi(\bfx))f(\bfy)d\bfy\,.
\end{align}
Note that $F$ is nonlinear in $\phi$ and that $f\mapsto F(f,\phi)$ is not
a convolution operator in general. As a density, $f$ has to be nonnegative. 
Therefore, we have the convex constraint 
$(f,\phi)\in \mathcal{C}$ with $\mathcal{C}:=\{(f,\phi):f\geq 0\}$.
A simple frequently used model for the 4Pi-psf (cf.,~e.g.,~\cite{Baddeley}) 
is given by 
\begin{align}\label{eq-psf-BCC}
\pfourPi(\bfx,\varphi) \approx \pconf (\bfx)
\cos^{n} \left(cx_{3} + \frac{\varphi}{2}\right),
\end{align}
where $\pconf $ is the psf of the corresponding confocal microscope,
and the cosine term represents the interference pattern for different types of 4Pi-microscopes corresponding to $n=2,4$, respectively (see Fig.~\ref{fig-4Pi-psf}). 
So far reconstruction of $f$ in commercially available 4Pi microscopes is done
by  standard deconvolution software assuming the relative phase function $\phi$ 
to be constant. Although spatial variations of $\phi$ can approximately 
be avoided experimentally
in some situations, the assumption that $\phi$ is constant imposes severe limitations
on the applicability and reliability of 4Pi microscopy. Therefore, it is of great
interest to develop algorithms for the solution of the convexly constrained
nonlinear inverse problem to recover both the object function $f$
and the relative phase function $\phi$ from the data $g$.  
%
\begin{figure}[htb]%
\def\fsca{0.133} 
\begin{center}%
\subfloat[$\varphi = 0$\label{fig-psfBCC-phi0}]{%
\begin{tikzpicture}%
\pgftext[]{\rotatebox{0}{\includegraphics[scale=\fsca]{./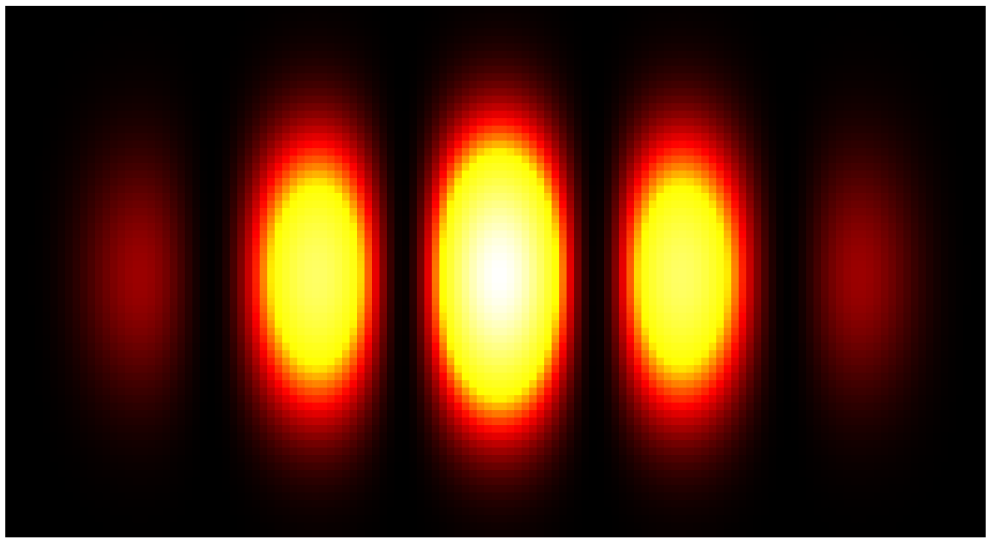}}}%
\draw[color=white,line width=1 pt,->,-latex] plot coordinates{(-1,1) (1,1)}; %
\end{tikzpicture}%
}%
\subfloat[$\varphi = \frac{\pi}{2}$\label{fig-psfBCC-phipi2}]{%
\begin{tikzpicture}%
\pgftext[]{%
\rotatebox{0}{\includegraphics[scale=\fsca]{./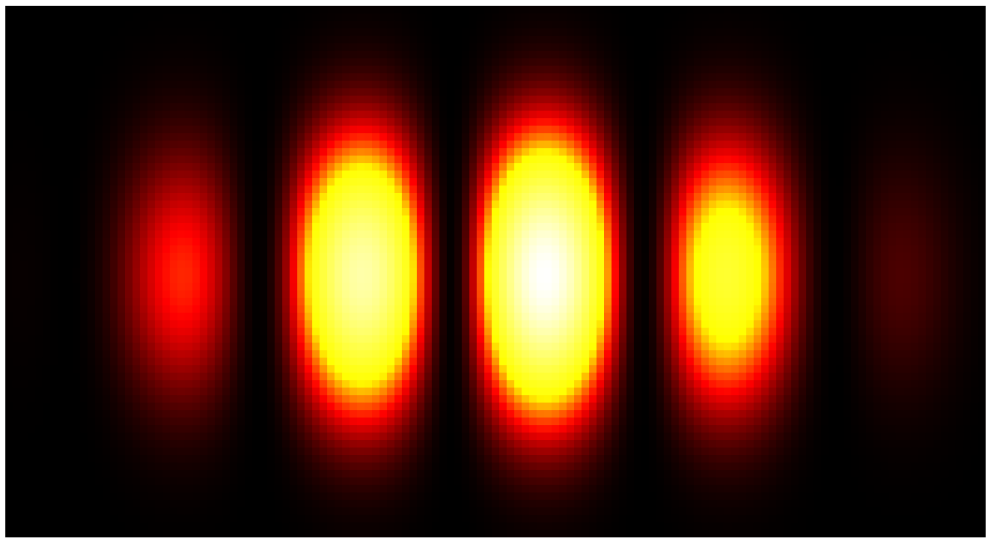}}%
}%
\draw[color=white,line width=1 pt] plot coordinates{(-0,1.3) (0.1,1.3)}; %
\end{tikzpicture}%
}%
\subfloat[$\varphi = \pi$\label{fig-psfBCC-phipi}]{%
\begin{tikzpicture}%
\pgftext[]{%
\rotatebox{0}{\includegraphics[scale=\fsca]{./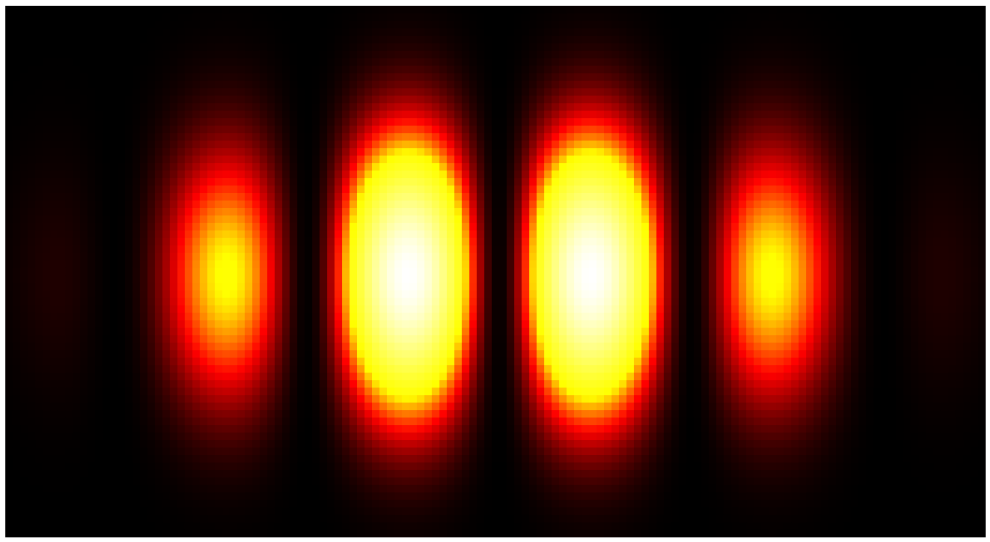}}%
}%
\draw[color=white,line width=1 pt] plot coordinates{(-0,1.3) (0.1,1.3)}; %
\end{tikzpicture}%
}\\%
\subfloat[$\varphi = 0$\label{fig-intr-4Pi-psf-phi0}]{%
\begin{tikzpicture}%
\pgftext[]{\rotatebox{0}{\includegraphics[scale=\fsca]{./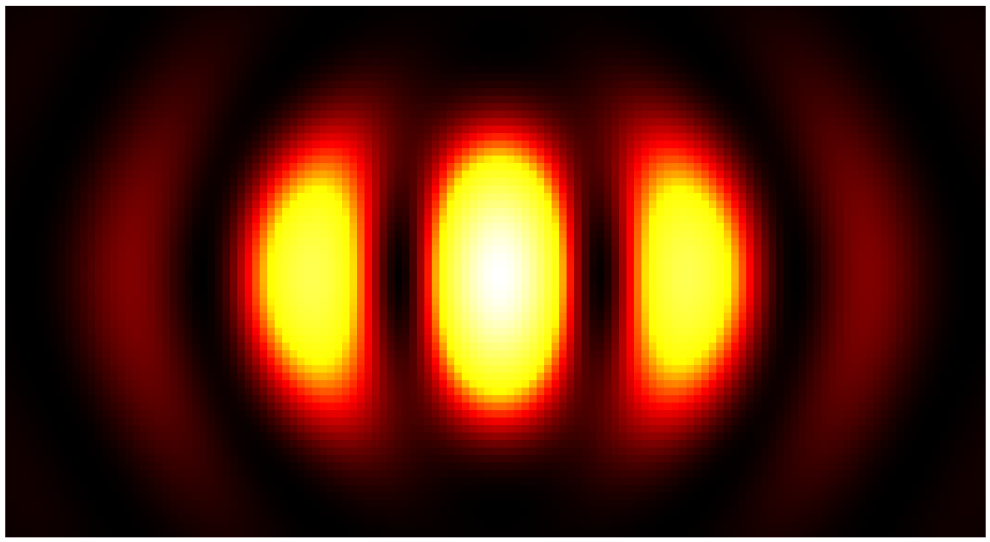}}}%
\draw[color=white,line width=1 pt,->,-latex] plot coordinates{(-1,1) (1,1)}; %
\end{tikzpicture}%
}%
\subfloat[$\varphi = \frac{\pi}{2}$\label{fig-intr-4Pi-psf-phipi2}]{%
\begin{tikzpicture}%
\pgftext[]{%
\rotatebox{0}{\includegraphics[scale=\fsca]{./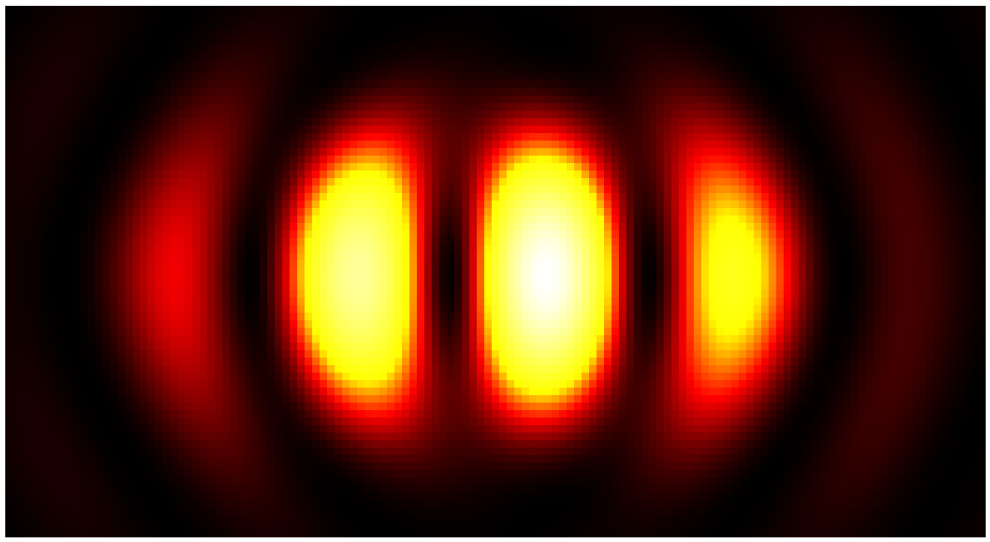}}%
}%
\draw[color=white,line width=1 pt] plot coordinates{(-0,1.3) (0.1,1.3)}; %
\end{tikzpicture}%
}%
\subfloat[$\varphi = \pi$\label{fig-intr-4Pi-psf-phipi}]{%
\begin{tikzpicture}%
\pgftext[]{%
\rotatebox{0}{\includegraphics[scale=\fsca]{./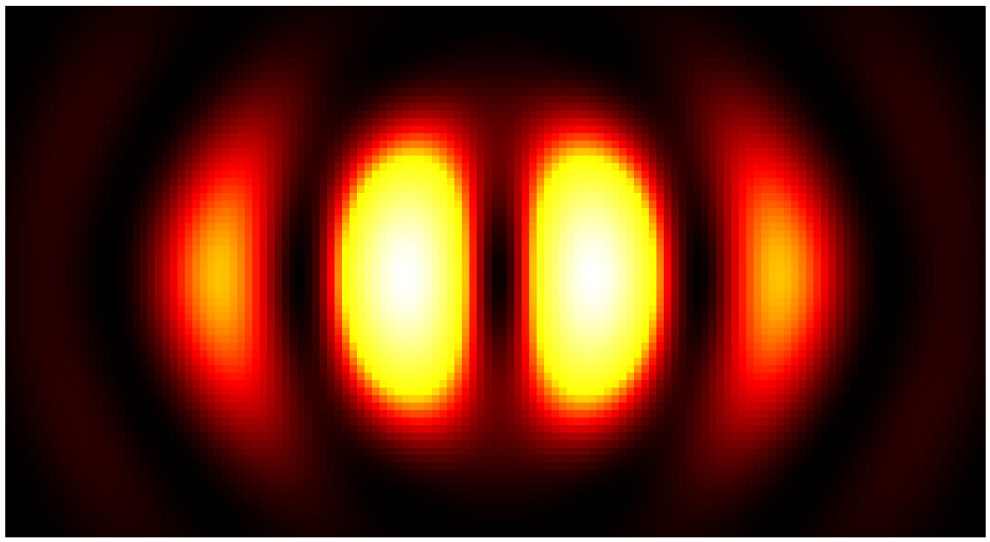}}%
}%
\draw[color=white,line width=1 pt] plot coordinates{(-0,1.3) (0.1,1.3)}; %
\end{tikzpicture}%
}%
\end{center}%
\caption{The top line shows the psf of a 4Pi microscope modeled by \eqref{eq-psf-BCC} for relative phases 
$\varphi=0,\frac{\pi}{2},\pi$ on the plane containing the optical axis, which is indicated by the 
white arrow. The bottom line shows the more accurate model \eqref{eq:accurate4PiPSF}.
}\label{fig-4Pi-psf}%
\end{figure}%

To this end we propose and analyze the following constrained version of the 
iteratively regularized Gau{\ss}-Newton method (IRGNM). We assume that both 
the right hand side $g$ in the operator equation \eqref{eq-op} and the 
operator $F$ are
only given approximately with errors by $g_{\delta}$ and $F_{\delta}$, 
respectively. Error bounds will be specified in the next section. 
Given some initial guess $x_0\in\mc{C}$, we consider the iteration   
\begin{subequations}\label{eqs:IRGNM}
\begin{align}\label{eq-IRGNMwc}
x_{n+1} = \underset{x\in \mc{C}}{\operatorname{argmin}}\left[
\left\| F_{\delta}'[x_{n}](x-x_{n}) + F_{\delta}(x_{n}) - g_{\delta} \right \|^2 + \alpha_{n} \|x-x_{0}\|^2\right]\,,
\end{align}
$n=0,1,2,\dots$, 
with a sequence of regularization parameters $\alpha_n$ satisfying
\begin{align}\label{eq-alpha}
1\leq\frac{\alpha_{n}}{\alpha_{n+1}}\leq r , \quad \lim_{n\to \infty}\alpha_{n}=0, \quad \alpha_{n}>0 \quad \mbox{for some } r> 1 
\mbox{ and for all }n\in \mathbb{N}_0.
\end{align}
\end{subequations}
In the unconstrained case $\mc{C}=\Xspace$ this reduces to the IRGNM as 
suggested in the original paper by Bakushinski{\u\i} \cite{Baku92}. For 
$\mc{C}\neq \Xspace$ a quadratic minimization problem with convex constraint
has to be solved in each Newton step. 
In \cite{Baku92} convergence rates were shown for H\"older type 
source conditions with exponent $\nu=1$.  In \cite{blasch97, Hohage1997} order 
optimal convergence rates for more general H\"older type and logarithmic 
source conditions were proven. For numerous further references on the 
IRGNM and other iterative regularization methods we
refer to the monographs \cite{BK:04,KNS:08}. 
More recently, Kaltenbacher and Hofmann \cite{kalt2010} proved optimal 
convergence rates of the IRGNM in Banach spaces for general source conditions. 

The convergence result we will present in the next section
(Theorem \ref{thm-main}) takes into account two features, which are
essential for 4Pi reconstructions and are not covered in the literature
so far: First of all, our source condition takes into account the convex 
constraint and is weaker than the corresponding source condition for
the unconstrained case, yielding the same rate of convergence. 
This reflects the observation reported below that projecting 
reconstructions of the unconstrained IRGNM onto $\mc{C}$ does not yield 
competitive results. For linear Tikhonov regularization with
convex constraints we refer to Neubauer \cite{Neub88}
and \cite[section 5.4]{EnglRIP}. Moreover, unlike many other references
on the IRGNM, we also take into account errors in the operator since
they are important in our application: The frequently used 
model \eqref{eq-psf-BCC} for the 4Pi psf is only a first approximation, 
and even the more accurate model based on the evaluation of diffraction
integrals, which we used in our code (see Fig.~\ref{fig-4Pi-psf}
and eq.~\eqref{eq:accurate4PiPSF} below), contains parameters,
which have to be estimated including errors. Other references discussing
the influence of errors in the operator for the IRGNM include 
\cite{BK:04} and \cite{hohage:00}. 





The plan of this paper is as follows: 
Our main convergence result, Theorem \ref{thm-main}, is formulated and 
proved in Section \ref{sec-IRGNMwcc}. Section \ref{sec-application} contains
a more detailed discussion of 4Pi microscopy and the model 
\eqref{eq-4Pi-op}, a comparison with other methods, and 
numerical results both for simulated and experimental data.

\section{IRGNM with Convex Constraints}\label{sec-IRGNMwcc}
\subsection{Formulation of the theorem}
We assume that $F, F_{\delta}:\mc{C}\to \Yspace$ are both Gateaux differentiable
with bounded derivatives $F'[x],F_{\delta}[x]$ for all $x\in\mathcal{C}$ 
and that the following error bounds hold:
\begin{subequations}\label{eqs-noisebounds}
\begin{align}
\label{eq-noise}
\nl g-g_{\delta}\nr &\leq \delta_{g},\\
\norm{F(x^{\dagger})-F_{\delta}(x^{\dagger})}&\leq \delta_{F}\\
\norm{F'[x^{\dagger}]-F'_{\delta}[x^{\dagger}]}&\leq \delta_{F'}\label{eq-operr-deriv}
\end{align}
\end{subequations}
with noise levels $\delta_{g},\delta_{F},\delta_{F'}\geq 0$. 

Further we assume that a source condition of the form
\begin{subequations}\label{eqs-source}
\begin{align}\label{eq-source}
x^{\dagger}&=P_{\mc{C}}(F'[x^{\dagger}]^\ast\omega + x_{0})\quad \text{for some } \omega\in \Yspace \text{ with } \nl \omega\nr\leq \rho
\end{align}
is satisfied where $P_{\mc{C}}: \Xspace\rightarrow \mc{C}$ denotes the 
metric projection onto $\mc{C}$. %
The source condition \eqref{eq-source} corresponds to the one for linear constrained Tikhonov regularization we assume in Lemma \ref{thm-519}, and since $\mc{R}(T^{*})=\mc{R}((T^{*}T)^{1/2})$ for a bounded linear operator $T:\Xspace\rightarrow \Yspace $ (cf.\ \cite[Proposition 2.18]{EnglRIP}) it corresponds to a H\"older-type source condition with exponent $\nu = \frac{1}{2}$. %
%
As the \eqref{eq-source} contains 
the projector $P_{\mc{C}}$, it is less restrictive than in the unconstrained case 
$\mc{C}=\Xspace$. In particular, $x^{\dagger}$ may not be smooth even if $F'[x^{\dagger}]^*$ is smoothing and $x_0$ is smooth. 
%

If $F'[x^{\dagger}]$ is not injective, we further assume that $x^{\dagger}$ 
satisfies
\begin{align}\label{eq:minimum_norm_sol}
x^{\dagger} =  \underset{\{x\in \mc{C}:F'[x^{\dagger}](x-x^{\dagger})=0\}} {\operatorname{argmin}}
\|x-x_0\|\,.
\end{align}
\end{subequations}
Obviously, this condition is empty if $F'[x^{\dagger}]$ is injective. 
Moreover, if for any $v_0\in N(F'[x^{\dagger}])$ there
exists a differentiable curve $v:[0,\epsilon)\to \mathcal{C}$ with
$v(0)=x^{\dagger}$, $v'(0)=v_0$ and $F(v(t))=g$ for all $t$ 
(see e.g. \cite{HH:09} for a problem where this condition is satisfied), then
it is easy to see that \eqref{eq:minimum_norm_sol} follows from 
$x^{\dagger}= {\operatorname{argmin}}_{\{x\in \mc{C}:F(x)=g\}}\|x-x_0\|$.

As nonlinearity condition on the operator $F_{\delta}$ we only need to assume 
that for some $\gamma >0$ there exists a Lipschitz constant $\Lip>0$ such that
\begin{eqnarray}\label{eq-Lipschitz}
\left\|F_{\delta}'[x]-F_{\delta}'[x^{\dagger}]\right\| \leq \Lip \norm{x-x^{\dagger}}\qquad \mbox{for all } x\in\mc{C} \text{ with }\norm{x-x^{\dagger}}\leq \gamma .
\end{eqnarray}
We can now formulate our main convergence theorem:
\begin{thm}\label{thm-main} 
Assume that \eqref{eq-op} and \eqref{eqs-noisebounds}--\eqref{eq-Lipschitz} 
are satisfied with $\rho$ is sufficiently small, set 
\begin{align}\label{eq-allerr}
\overline{\delta}:=\max(\delta_{g}+\delta_{F},\delta_{F'}^{2}),
\end{align}
and consider the sequence $(x_n)$ defined by \eqref{eqs:IRGNM}.\\
Then the iterates satisfy $\norm{x_{n}-x^{\dagger}}\leq\gamma$ and in the noise free case $\overline{\delta} = 0$ we have
\begin{align}\label{eq-noisefreerate}
 \nl x_{n}-x^{\dagger}\nr=\mc{O}(\sqrt{\alpha_{n}}),
 \qquad n\to\infty.
 \end{align}
For $\overline{\delta} > 0$ assume that a stopping index $N$ is chosen such that
\begin{eqnarray}\label{eq-stopping}
\alpha_{N}<\eta\overline{\delta}\leq\alpha_{n},\quad 0\leq n< N
\end{eqnarray}
with some constant $\eta > 0$ sufficiently large. Then the error of the final approximation fulfills
\begin{eqnarray}\label{eq-noisyrate}
 \nl x_{N} - x^{\dagger}\nr = \mc{O}\left(\sqrt{\overline{\delta}}\right),
 \qquad \overline{\delta}\to 0\,.
 \end{eqnarray}
\end{thm}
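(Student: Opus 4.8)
The plan is to run a simultaneous induction on $n$ establishing both $\norm{x_n-x^\dagger}\le\gamma$ (so that the Lipschitz condition \eqref{eq-Lipschitz} may be invoked at step $n$) and the quantitative bound $\norm{x_n-x^\dagger}\le C\sqrt{\alpha_n}$ for a constant $C$ to be fixed. Writing $\epsilon_n:=\norm{x_n-x^\dagger}$, the central object is a recursive inequality for $\epsilon_{n+1}$ in terms of $\epsilon_n$, the regularization parameter $\alpha_n$, and the noise levels. To obtain it I would read off each Newton step \eqref{eq-IRGNMwc} as a linear constrained Tikhonov problem with operator $T_n:=F_\delta'[x_n]$ and apply Lemma \ref{thm-519}, whose source condition is exactly \eqref{eq-source} transported to $T_n$. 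The two nonstandard features --- the linearization error, and the fact that the source condition is posed for $F'[x^\dagger]$ rather than for $T_n$ --- are absorbed into the data error and into a perturbation of the source element, respectively.

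Concretely, shifting the residual by $x^\dagger$ gives $F_\delta'[x_n](x-x_n)+F_\delta(x_n)-g_\delta = T_n(x-x^\dagger)-d_n$ with effective data error $d_n = T_n(x_n-x^\dagger)+g_\delta-F_\delta(x_n)$. Splitting off $g=F(x^\dagger)$ and writing $F_\delta(x_n)-F_\delta(x^\dagger)$ through the integral mean value theorem, the Lipschitz bound \eqref{eq-Lipschitz} yields $\norm{T_n(x_n-x^\dagger)-(F_\delta(x_n)-F_\delta(x^\dagger))}\le\tfrac{\Lip}{2}\epsilon_n^2$, so that $\norm{d_n}\le\tfrac{\Lip}{2}\epsilon_n^2+\delta_g+\delta_F$ by \eqref{eq-noise}. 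For the source term I would use the variational characterization of the metric projection: \eqref{eq-source} is equivalent to $\scp{F'[x^\dagger]^\ast\omega+x_0-x^\dagger,\,x-x^\dagger}\le 0$ for all $x\in\mc{C}$, which I test at the feasible iterate $x=x_{n+1}$. This is precisely the point at which the weaker, constraint-aware source condition is exploited. Replacing $F'[x^\dagger]$ by $T_n$ there costs a factor $\norm{F'[x^\dagger]-F_\delta'[x_n]}\le\delta_{F'}+\Lip\epsilon_n$ (by \eqref{eq-operr-deriv} and \eqref{eq-Lipschitz}). Combining the optimality variational inequality for $x_{n+1}$ with these two estimates and applying Young's inequality gives the recursion
\[
\epsilon_{n+1}\le \frac{\tfrac{\Lip}{2}\epsilon_n^2+\delta_g+\delta_F}{\sqrt{\alpha_n}}+\sqrt{\alpha_n}\,\rho+\rho\,\delta_{F'}+\rho\,\Lip\,\epsilon_n.
\]

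It then remains to close the induction. The base case follows from the source condition itself: since $x_0\in\mc{C}$ and $P_{\mc{C}}$ is nonexpansive, \eqref{eq-source} gives $\epsilon_0=\norm{P_{\mc{C}}(F'[x^\dagger]^\ast\omega+x_0)-P_{\mc{C}}(x_0)}\le\norm{F'[x^\dagger]}\rho$, which is $\le\gamma$ and $\le C\sqrt{\alpha_0}$ once $\rho$ is small. For the inductive step I would insert $\epsilon_n\le C\sqrt{\alpha_n}$ into the recursion and use $\sqrt{\alpha_n}\le\sqrt{r}\,\sqrt{\alpha_{n+1}}$ from \eqref{eq-alpha}; in the noisy case the terms $\delta_g+\delta_F\le\overline{\delta}$ and $\delta_{F'}\le\sqrt{\overline{\delta}}$ are converted into multiples of $\sqrt{\alpha_n}$ via the stopping constraint $\alpha_n\ge\eta\overline{\delta}$ valid for $n<N$. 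This reduces the step to the scalar quadratic inequality $\sqrt{r}\bigl(\tfrac{\Lip}{2}C^2+\rho\Lip C+(\text{small})\bigr)\le C$, whose constant term is controlled by $\rho$ and $\eta^{-1}$; for $\rho$ sufficiently small and $\eta$ sufficiently large its discriminant is positive and a feasible $C$ with $C\sqrt{\alpha_0}\le\gamma$ exists, closing both assertions. Finally, $\epsilon_n\le C\sqrt{\alpha_n}$ gives \eqref{eq-noisefreerate} when $\overline{\delta}=0$, and at the stopping index $\alpha_N<\eta\overline{\delta}$ yields $\epsilon_N\le C\sqrt{\alpha_N}\le C\sqrt{\eta}\,\sqrt{\overline{\delta}}$, i.e.\ \eqref{eq-noisyrate}. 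I note that $\delta_{F'}$ enters the recursion only linearly, through $\rho\,\delta_{F'}$; this is exactly why $\overline{\delta}$ in \eqref{eq-allerr} pairs $\delta_{F'}^2$ with $\delta_g+\delta_F$, so that all error contributions are of the common order $\sqrt{\overline{\delta}}$.

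The main obstacle I anticipate is closing the induction rather than deriving the one-step estimate: the quadratic term $\tfrac{\Lip}{2}\epsilon_n^2/\sqrt{\alpha_n}$ makes the argument self-referential, and one must verify that the admissible range of $C$ from the scalar quadratic inequality simultaneously accommodates the base-case requirement $\epsilon_0\le C\sqrt{\alpha_0}$ and the feasibility requirement $C\sqrt{\alpha_0}\le\gamma$. This is what forces the smallness of $\rho$ (and, in the noisy regime, the largeness of $\eta$) in the hypotheses. A secondary technical point is the operator mismatch between $F'[x^\dagger]$ and $T_n=F_\delta'[x_n]$, whose $\delta_{F'}+\Lip\epsilon_n$ cost must be shown not to degrade the order; here the $\Lip\epsilon_n$ part is harmless because it is again $\mc{O}(\sqrt{\alpha_n})$ under the induction hypothesis and merely renormalizes $C$. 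If $F'[x^\dagger]$ is non-injective, the minimum-norm condition \eqref{eq:minimum_norm_sol} is what guarantees that $x^\dagger$ is the element actually selected in the limit, so that the source condition is posed about the correct solution.
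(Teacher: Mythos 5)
Your proposal is correct in substance and arrives at the same one-step recursion and the same induction-on-$\Theta_n=\norm{x_n-x^\dagger}/\sqrt{\alpha_n}$ closing argument as the paper, but the derivation of the recursion takes a genuinely different route. The paper splits $x_{n+1}-x^\dagger$ through two intermediate constrained Tikhonov minimizers (one with operator $T_n$ and exact data, one with operator $F'[x^\dagger]$) and controls the three differences by separate results: stability with respect to the data (Lemma \ref{thm-519}, part 1), the approximation rate under the projected source condition (Lemma \ref{thm-519}, part 2), and, crucially, a dedicated operator-perturbation estimate (Proposition \ref{prop-hyp}) showing that under the source condition the middle term is bounded by $\sqrt{3/2}\,\rho\norm{T_n-F'[x^\dagger]}$ with a constant \emph{independent of} $\alpha_n$ --- the paper explicitly notes that the naive bound $c\,\alpha^{-1}\norm{T_1-T_2}$ would not suffice. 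You instead test the first-order optimality (variational) inequality of $x_{n+1}$ at $x^\dagger$ and the projection characterization of \eqref{eq-source} at $x_{n+1}$, and absorb everything in one Young-inequality step; carried out, this does reproduce the recursion with the operator-mismatch term entering as $\rho(\delta_{F'}+\Lip\norm{e_n})$ without negative powers of $\alpha_n$, because that term appears multiplied by $\alpha_n$ through the regularization part of the optimality condition. So your approach effectively re-derives the content of Proposition \ref{prop-hyp} inline and is arguably more streamlined; what it loses is the modularity of the paper's presentation (the reusable linear-theory lemmas) and the explicit use of the a priori bounds $\norm{T_2(x_2-\tilde x)}\le\alpha\norm{\omega}$, $\norm{x_2-\tilde x}\le\sqrt{\alpha}\norm{\omega}$, which is the one step you should spell out when writing the argument in full. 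Two smaller remarks: your base-case bound $\norm{x_0-x^\dagger}\le\norm{F'[x^\dagger]}\rho$ via nonexpansiveness of $P_{\mc{C}}$ is a useful detail the paper leaves implicit when asserting that conditions \eqref{eqs-ind} hold for small $\rho$; and the role of \eqref{eq:minimum_norm_sol} is not about ``selection in the limit'' but about making the linear-theory lemmas applicable to $x^\dagger$ --- in your direct derivation it is in fact a consequence of the projected source condition rather than an independent ingredient.
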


\subsection{Proof of the theorem}
Note that if $F=F_{\delta}=T:\Xspace\rightarrow \Yspace$ is linear and bounded, 
then \eqref{eq-IRGNMwc} reduces to linear constrained Tikhonov regularization 
\begin{align}\label{eq-lintikh}
x_{\alpha}:=\underset{x\in\mc{C}}{\argmin}
\left[\norm{Tx-g_{\delta}}^{2} + \alpha\norm{x-x_{0}}^{2}\right]
\end{align}
for a sequence of regularization parameter $\alpha=\alpha_n$. 
We first recall the stability and approximation properties in this case 
since they will be needed later in the proof.  
\begin{lem}\label{thm-519}
\begin{enumerate}
\item
If 
\[\overline{x}_{\alpha}:=\underset{x\in\mc{C}}{\argmin}
\left[\norm{Tx-\overline{g}_{\delta}}^{2} + \alpha\norm{x-x_{0}}^{2}\right]
\]
for some $\overline{g}_{\delta}\in \Yspace$, then
\begin{align}\label{eq:stabil}
\|x_{\alpha}-\overline{x}_{\alpha}\|\leq \frac{\|g_{\delta}-\overline{g}_{\delta}\|}
{\alpha}\,.
\end{align}
\item
Let $g=g^{\delta}\in T(\mc{C})$ and $x_0\in \mc{C}$, and assume that the 
best-approximate-solution 
$x^{\dagger}_{\mc{C}}:=\argmin_{\{x\in\mc{C}:Tx=g \}}\|x-x_0\|$
satisfies the source condition
\begin{align}\label{eq-source519}
x^{\dagger}_{\mc{C}} = P_{\mc{C}}(T^{*}\omega+x_0)
\end{align}
for some $\omega\in \Yspace$. Then 
\begin{align}\label{eq:linear_approx}
\norm{x_{\alpha}-x^{\dagger}_{\mc{C}}} = \sqrt{\alpha}\norm{\omega}\quad \text{and}\quad \norm{Tx_{\alpha}-g} = \alpha\norm{\omega}.
\end{align}
\end{enumerate}
\end{lem}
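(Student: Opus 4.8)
\emph{Both parts rest on the first-order optimality characterization of the constrained minimizer.} For $\alpha>0$ the Tikhonov functional $x\mapsto\norm{Tx-g_{\delta}}^{2}+\alpha\norm{x-x_{0}}^{2}$ is strictly convex and coercive, hence it has a unique minimizer $x_{\alpha}\in\mc{C}$, which is characterized by the variational inequality
\[
\scp{T^{*}(Tx_{\alpha}-g_{\delta})+\alpha(x_{\alpha}-x_{0}),\,x-x_{\alpha}}\geq 0\qquad\text{for all }x\in\mc{C}.
\]
This single inequality drives part 1, and for part 2 I would combine it with the projection characterization of the source condition.

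\textbf{Part 1 (stability).} I write this inequality once for $x_{\alpha}$ (data $g_{\delta}$) and once for $\overline{x}_{\alpha}$ (data $\overline{g}_{\delta}$), insert the admissible competitor $x=\overline{x}_{\alpha}$ into the first and $x=x_{\alpha}$ into the second, and add the two. Setting $d:=x_{\alpha}-\overline{x}_{\alpha}$ and using $\scp{T^{*}Td,\,d}=\norm{Td}^{2}$, the sum collapses to
\[
\norm{Td}^{2}+\alpha\norm{d}^{2}\leq\scp{g_{\delta}-\overline{g}_{\delta},\,Td}.
\]
Discarding the nonnegative term $\norm{Td}^{2}$ and applying Cauchy--Schwarz to the right-hand side isolates one power of $\norm{d}$ and yields the stability estimate \eqref{eq:stabil}.

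\textbf{Part 2 (approximation).} Now $g_{\delta}=g=Tx^{\dagger}_{\mc{C}}$, so with $e:=x_{\alpha}-x^{\dagger}_{\mc{C}}$ the residual is $r:=Tx_{\alpha}-g=Te$, and the identity $\scp{T^{*}r,\,e}=\scp{r,\,Te}=\norm{r}^{2}$ couples residual and error. Testing the optimality inequality for $x_{\alpha}$ against the feasible point $x=x^{\dagger}_{\mc{C}}$ gives
\[
\norm{r}^{2}+\alpha\norm{e}^{2}+\alpha\scp{x^{\dagger}_{\mc{C}}-x_{0},\,e}\leq 0 .
\]
The source condition $x^{\dagger}_{\mc{C}}=P_{\mc{C}}(T^{*}\omega+x_{0})$ says precisely that $T^{*}\omega+x_{0}-x^{\dagger}_{\mc{C}}$ lies in the normal cone of $\mc{C}$ at $x^{\dagger}_{\mc{C}}$; testing its defining inequality against $x=x_{\alpha}$ gives $\scp{x^{\dagger}_{\mc{C}}-x_{0},\,e}\geq\scp{\omega,\,r}$. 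Substituting this lower bound turns the previous display into
\[
\norm{r}^{2}+\alpha\norm{e}^{2}\leq-\alpha\scp{\omega,\,r}\leq\alpha\norm{\omega}\,\norm{r}.
\]
Dropping $\alpha\norm{e}^{2}$ yields $\norm{Tx_{\alpha}-g}=\norm{r}\leq\alpha\norm{\omega}$; feeding this back and dropping $\norm{r}^{2}$ yields $\alpha\norm{e}^{2}\leq\alpha^{2}\norm{\omega}^{2}$, i.e.\ $\norm{x_{\alpha}-x^{\dagger}_{\mc{C}}}\leq\sqrt{\alpha}\,\norm{\omega}$, which are the two orders asserted in \eqref{eq:linear_approx}.

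\textbf{Where the difficulty lies.} The crux of part 2 is that both ingredients are \emph{one-sided} variational inequalities pointing in compatible directions: the Tikhonov optimality tested at $x^{\dagger}_{\mc{C}}$, and the normal-cone/projection characterization of the source condition tested at $x_{\alpha}$. The key observation is that these chain together through the common cross term $\scp{x^{\dagger}_{\mc{C}}-x_{0},\,e}$, with the projection in \eqref{eq-source519} supplying exactly the missing lower bound $\scp{\omega,\,r}$; this is the mechanism by which the weaker constrained source condition reproduces the unconstrained $\nu=\tfrac12$ rate. I expect this argument to deliver the two estimates in the displayed orders; since sharpening them to the equalities written in \eqref{eq:linear_approx} would require matching lower bounds that fail termwise already in the unconstrained limit, I would read the claim as the pair of upper bounds just derived.
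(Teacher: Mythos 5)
Your argument is correct in substance but takes a genuinely different route from the paper: the paper does not reprove these estimates at all, but reduces them via the translation $z=x-x_0$ (using $P_{\mc{C}-x_0}(T^*\omega)=P_{\mc{C}}(T^*\omega+x_0)-x_0$) to the case $x_0=0$, which is covered by Theorems 5.16 and 5.19 of Engl--Hanke--Neubauer. You instead rederive both parts directly from the variational inequality characterizing the constrained minimizer, combined with the normal-cone reading of the projection in \eqref{eq-source519}. Your part 2 is a complete and correct proof of the two upper bounds, and your decision to read the equalities in \eqref{eq:linear_approx} as inequalities is the right one (the paper itself uses them as inequalities in \eqref{eq-ripres}). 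The self-contained derivation is arguably preferable here, since it exposes exactly the mechanism that is reused in the proof of Proposition \ref{prop-hyp}.

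One step in part 1 does not work as written. From $\norm{Td}^2+\alpha\norm{d}^2\leq\scp{g_{\delta}-\overline{g}_{\delta},Td}$, discarding $\norm{Td}^2$ and applying Cauchy--Schwarz leaves $\alpha\norm{d}^2\leq\norm{g_{\delta}-\overline{g}_{\delta}}\,\norm{Td}$, and converting $\norm{Td}$ into $\norm{d}$ costs a factor $\norm{T}$, so this route yields $\norm{d}\leq\norm{T}\,\norm{g_{\delta}-\overline{g}_{\delta}}/\alpha$ rather than \eqref{eq:stabil}. The correct move is to keep the $\norm{Td}^2$ term: either absorb the right-hand side by Young's inequality, $\scp{g_{\delta}-\overline{g}_{\delta},Td}\leq\norm{Td}^2+\tfrac14\norm{g_{\delta}-\overline{g}_{\delta}}^2$, or first deduce $\norm{Td}\leq\norm{g_{\delta}-\overline{g}_{\delta}}$ by dropping $\alpha\norm{d}^2$ and then substitute back; both give $\norm{d}\leq\norm{g_{\delta}-\overline{g}_{\delta}}/\sqrt{\alpha}$ (even with an extra factor $\tfrac12$ in the first case). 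Note that this is the bound the paper actually uses downstream --- in the proof of Lemma \ref{lem-recursive}, part 1 is invoked with $\sqrt{\alpha_n}$ in the denominator --- so the $\alpha$ in \eqref{eq:stabil} appears to be a typo for $\sqrt{\alpha}$, and your argument, once the last step is repaired as above, proves the statement that is actually needed.
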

\begin{proof}
In \eqref{eq:stabil} the special case $x_0=0$ is proved in 
\cite[Theorem 5.16]{EnglRIP}. The general case can be reduced
to this special case by the substitution of variables $z=x-x_0$ since 
\begin{align}\label{eq-lintikh-trans}
x_{\alpha}- x_0 = \underset{z\in\mc{C}-x_0}{\argmin}
\left[\norm{Tz + Tx_0-g_{\delta}}^{2} + \alpha\norm{z}^{2}\right]\,.
\end{align}
\eqref{eq:linear_approx} can be reduced to the case $x_0=0$, which
is covered by \cite[Theorem 5.19]{EnglRIP}, by the same substitution
of variables and the identity
$P_{\mc{C}-x_0}(T^\ast\omega) = P_{\mc{C}}(T^\ast\omega + x_{0})-x_0$.
\end{proof}
%
%
%
%
Next we need a stability estimate with respect to perturbations of the
operators, i.e.\ an estimate on the difference of
\begin{eqnarray}\label{eq-prop-funct}
 x_i &:=&  \underset{x\in\mc{C}}{\argmin} 
 \left[\left\| T_{i}(x - \tilde x) \right \|^2
 + \alpha \left\| x-x_{0}\right\|^2\right],\quad i\in\{1,2\},
 \end{eqnarray}
where $T_1,T_2:\Xspace\to \Yspace$ are bounded linear operators 
and $\alpha>0$. Using the optimality conditions for the minimizers of 
\eqref{eq-prop-funct}, a straightforward computation gives an estimate
of the form
\begin{align}
\nl x_{1}-x_{2}\nr \leq \frac{c}{\alpha}\nl T_{1}-T_{2}\nr.
\end{align}
This simple estimate is not sufficient for our purposes, however. 
The following proposition shows that under a source condition we can obtain
an improved estimate with a constant independent of $\alpha$:
\begin{prop}\label{prop-hyp}
Let $x_1$ and $x_2$ be defined by \eqref{eq-prop-funct}.
Moreover, let the source condition
 \begin{eqnarray}\label{eq-prop-source}
\tilde x = P_{\mc{C}}(T_{2}^\ast\omega + x_{0})
\end{eqnarray}
hold for some $\omega\in \Yspace$ and let 
$\tilde{x} = \argmin_{\{x\in\mc{C}:T_2x=T_2\tilde{x} \}}\|x-x_0\|$. 
Then the distance of $x_{1}$ and $x_{2}$ is bounded by
 \[ \nl x_{1}-x_{2}\nr \leq \sqrt{\frac{3}{2}}\norm{\omega}\nl T_{1}-T_{2}\nr .\]
 \end{prop}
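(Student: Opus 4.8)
The plan is to exploit the first-order optimality (variational) inequalities characterizing the two constrained minimizers and to feed the second problem into Lemma~\ref{thm-519}. Since each $x_i$ minimizes a differentiable convex quadratic over the convex set $\mc{C}$, it satisfies
\[
\scp{T_i^\ast T_i(x_i-\tilde x) + \alpha(x_i-x_0),\, x-x_i}\geq 0\qquad\text{for all } x\in\mc{C},\ i\in\{1,2\}.
\]
First I would test the inequality for $i=1$ with $x=x_2$ and the one for $i=2$ with $x=x_1$ (both admissible since $x_1,x_2\in\mc{C}$) and add the two. Writing $d:=x_1-x_2$, the $\alpha$-terms combine to $-\alpha\norm{d}^2$, and after adding and subtracting $T_1^\ast T_1(x_2-\tilde x)$ one is left with the key inequality
\[
\alpha\norm{d}^2 + \norm{T_1 d}^2 \leq \scp{(T_2^\ast T_2-T_1^\ast T_1)(x_2-\tilde x),\, d}.
\]
The appearance of the nonnegative term $\norm{T_1 d}^2$ on the left is not an accident; retaining it is what will later let the mixed cross-term be absorbed with the sharp constant.

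The decisive step is to control $x_2-\tilde x$. Here I would observe that the $i=2$ functional is exactly linear constrained Tikhonov regularization with operator $T_2$ and exact data $g:=T_2\tilde x\in T_2(\mc{C})$, and that by hypothesis $\tilde x$ is both the minimum-norm solution of $T_2 x=T_2\tilde x$ over $\mc{C}$ and satisfies the source condition \eqref{eq-prop-source}. Thus $\tilde x$ is the best-approximate solution and Lemma~\ref{thm-519}(2) applies verbatim, giving the sharp identities
\[
\norm{x_2-\tilde x}=\sqrt{\alpha}\,\norm{\omega}\qquad\text{and}\qquad \norm{T_2(x_2-\tilde x)}=\alpha\,\norm{\omega}.
\]
The factor $\alpha$ (rather than $\sqrt{\alpha}$) in the second identity is precisely the smoothing gain furnished by the source condition, and it is what cancels the $1/\alpha$ blow-up of the naive stability estimate.

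To finish, I would write $\Delta:=T_1-T_2$, use the identity $T_2^\ast T_2-T_1^\ast T_1=-\Delta^\ast T_2-T_1^\ast\Delta$, and apply Cauchy--Schwarz to the right-hand side of the key inequality to obtain
\[
\alpha\norm{d}^2+\norm{T_1 d}^2\leq \norm{T_2(x_2-\tilde x)}\,\norm{\Delta}\,\norm{d}+\norm{x_2-\tilde x}\,\norm{\Delta}\,\norm{T_1 d}.
\]
Inserting the two identities from the Lemma and abbreviating $a:=\sqrt{\alpha}\,\norm{d}$, $b:=\norm{T_1 d}$ and $K:=\sqrt{\alpha}\,\norm{\omega}\,\norm{\Delta}$ reduces everything to the scalar inequality $a^2+b^2\leq Ka+Kb$.

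The main obstacle is then purely to extract the sharp constant $\sqrt{3/2}$ from this scalar inequality, since a symmetric estimate of $a$ and $b$ (via $a+b\leq\sqrt2\,\sqrt{a^2+b^2}$) only yields the weaker bound $a\leq\sqrt2\,K$. The trick is to treat the two cross-terms differently: bounding the first by Young's inequality as $Ka\leq\tfrac12 a^2+\tfrac12 K^2$ and completing the square in $b$ via $b^2-Kb\geq -\tfrac14 K^2$ gives $\tfrac12 a^2\leq\tfrac34 K^2$, i.e.\ $a\leq\sqrt{3/2}\,K$. Dividing by $\sqrt{\alpha}$ then yields $\norm{x_1-x_2}\leq\sqrt{3/2}\,\norm{\omega}\,\norm{T_1-T_2}$, as claimed. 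I expect the only genuinely delicate points to be the recognition that $\tilde x$ is the best-approximate solution of the $i=2$ problem (so that Lemma~\ref{thm-519} is applicable and delivers the order-$\alpha$ bound on $\norm{T_2(x_2-\tilde x)}$) and this final asymmetric Young/completion-of-squares step.
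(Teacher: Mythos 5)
Your proposal is correct and follows essentially the same route as the paper: the same first-order optimality conditions (you phrase them as variational inequalities, the paper uses subgradients of the indicator function of $\mc{C}$ and monotonicity, which is equivalent), the same application of Lemma~\ref{thm-519}(2) to bound $\norm{x_2-\tilde x}$ and $\norm{T_2(x_2-\tilde x)}$, the same splitting of $T_2^\ast T_2-T_1^\ast T_1$ into the two cross-terms, and the same asymmetric Young/completing-the-square estimates yielding $\tfrac12\alpha\norm{x_1-x_2}^2\leq\tfrac34\alpha\norm{\omega}^2\norm{T_1-T_2}^2$. No gaps.
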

 
 \begin{proof}
From Lemma \ref{thm-519}, part 2 we obtain
\begin{subequations}\label{eq-ripres}
\begin{eqnarray}
\nl T_{2}(x_{2} - \tilde x)\nr &\leq& \alpha \nl \omega\nr\\
\nl x_{2} - \tilde x\nr &\leq& \sqrt{\alpha} \nl \omega\nr .
\end{eqnarray}
\end{subequations}
Let $\chi_{\mc{C}}:X\rightarrow \mathbb{R^{+}}$ be the proper, convex and lower semicontinuous functional
\begin{align*}
\chi_{\mc{ C}}(x):=
\begin{cases}
0 & x\in \mc{C}\\
\infty & \mbox{otherwise}
\end{cases},
\end{align*}
and let $q_{i}\in \partial \chi_{\mc{ C}}(x_{i})$, then for $i\in\{1,2\}$ the first order optimality condition for the minimizers $x_{i}$ is given by
\begin{align}\label{eq-start}
T_{i}^\ast T_{i} (x_{i}-\tilde x) + \alpha (x_{i} - x_{0}) + q_{i} = 0.
\end{align}
Subtracting the equations \eqref{eq-start} gives
\begin{eqnarray*}
T_{1}^\ast T_{1} (x_{1} - x_{2}) + \alpha (x_{1}-x_{2}) + (q_{1}-q_{2})= (T_{2}^\ast T_{2} - T_{1}^\ast T_{1})(x_{2}-\tilde x) .
\end{eqnarray*}
Now taking the inner product with $x_{1}-x_{2}$ we obtain
\begin{align}\label{eq-prop-inner}
\begin{aligned}
& \nl T_{1}(x_{1}-x_{2})\nr^2 + \alpha \nl x_{1}-x_{2}\nr^2 +\langle q_{1} - q_{2},x_{1}-x_{2}\rangle =\\
& \langle(T_{2}^\ast - T_{1}^\ast)T_{2}(x_{2}-\tilde x),x_{1}-x_{2}\rangle + \langle(T_{2}-T_{1})(x_{2}-\tilde x),T_{1}(x_{1}-x_{2})\rangle .
\end{aligned}
\end{align}
The right hand side can be estimated with help of Young's inequality
\begin{align}\label{eq-young}
&\langle(T_{2}^\ast - T_{1}^\ast)T_{2}(x_{2}-\tilde x),x_{1}-x_{2}\rangle + \langle(T_{2}-T_{1})(x_{2}-\tilde x),T_{1}(x_{1}-x_{2})\rangle\\
& \leq \frac{1}{2\alpha}\norm{T_{1}-T_{2}}^{2}\norm{T_{2}(x_{2}-\tilde{x})}^{2} + \frac{\alpha}{2}\norm{x_{1}-x_{2}}^{2} + \frac{1}{4}\norm{T_{1}-T_{2}}^{2}\norm{x_{2}-\tilde{x}}^{2}+\norm{T_{1}(x_{1}-x_{2})}^{2}.\nonumber
\end{align}
Using $\langle q_{1} - q_{2},x_{1}-x_{2}\rangle \geq 0$ 
(see, e.g., \ \cite[Section 9.6.1, Theorem 1]{Evans}),  \eqref{eq-young} 
and the inequalities \eqref{eq-ripres} the assertion follows from
\begin{align*}
\frac{\alpha}{2} \nl x_{1}-x_{2}\nr^2 &\leq \frac{1}{2\alpha}\nl T_{2}(x_{2}-\tilde x)\nr^2\nl T_{1}-T_{2}\nr^2 + \frac{1}{4}\nl x_{2}-\tilde x\nr^2\nl T_{1}-T_{2}\nr^2\\
&\leq\frac{3}{4}\alpha \nl\omega\nr^2\nl T_{1}-T_{2}\nr^2.
\end{align*}
\end{proof}
Now we are able to formulate a recursive error estimate for the IRGNM with closed convex constraint.
\begin{lem}\label{lem-recursive}
Assume that  \eqref{eq-op} and \eqref{eqs:IRGNM}--\eqref{eq-Lipschitz} are satisfied 
and that $x_{n}\in \mc{C}$ with $\norm{x_{n}-x^{\dagger}}\leq\gamma$, then the error $e_{n}:=x_{n}-x^{\dagger}$ satisfies
\begin{align}\label{eq-lem-recursive}
\nl e_{n+1}\nr \leq \frac{1}{\sqrt{\alpha_{n}}}\frac{\Lip}{2}\nl e_{n}\nr^2 + \sqrt{\frac{3}{2}}\rho \Lip\nl e_{n}\nr + \frac{1}{\sqrt{\alpha_{n}}} \left(\delta_{g} + \delta_{F}\right) + \sqrt{\alpha_{n}}\rho + \sqrt{\frac{3}{2}}\rho\delta_{F'}.
\end{align}
\end{lem}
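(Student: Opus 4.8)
The plan is to read each Newton step \eqref{eq-IRGNMwc} as an instance of constrained linear Tikhonov regularization \eqref{eq-lintikh}: with $T_n := F_\delta'[x_n]$ and the shifted data $\tilde g_\delta := g_\delta - F_\delta(x_n) + T_n x_n$ one has $x_{n+1} = \argmin_{x\in\mc{C}}[\|T_n x - \tilde g_\delta\|^2 + \alpha_n\|x-x_0\|^2]$. I would then separate the three error mechanisms --- data noise, operator perturbation, and the regularization (approximation) error --- by interpolating through two auxiliary minimizers that share successively more structure with the ``ideal'' problem. Concretely, set
\[
y_{n+1} := \argmin_{x\in\mc{C}}\bigl[\|T_n(x-x^\dagger)\|^2 + \alpha_n\|x-x_0\|^2\bigr], \qquad
z_{n+1} := \argmin_{x\in\mc{C}}\bigl[\|F'[x^\dagger](x-x^\dagger)\|^2 + \alpha_n\|x-x_0\|^2\bigr],
\]
and use the triangle inequality $\|e_{n+1}\| \le \|x_{n+1}-y_{n+1}\| + \|y_{n+1}-z_{n+1}\| + \|z_{n+1}-x^\dagger\|$. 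The point of this splitting is that consecutive pairs differ in only one ingredient, so each gap can be handled by exactly one of the results already proved.

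The last two gaps are immediate. For $\|z_{n+1}-x^\dagger\|$ I would apply Lemma \ref{thm-519}(2) with $T=F'[x^\dagger]$ and right-hand side $F'[x^\dagger]x^\dagger$: its hypotheses are precisely the source condition \eqref{eq-source} and the minimum-norm property \eqref{eq:minimum_norm_sol}, and it yields $\|z_{n+1}-x^\dagger\| = \sqrt{\alpha_n}\|\omega\| \le \sqrt{\alpha_n}\rho$. The middle gap is a pure operator perturbation with common centre $\tilde x = x^\dagger$, so Proposition \ref{prop-hyp} gives $\|y_{n+1}-z_{n+1}\| \le \sqrt{3/2}\,\|\omega\|\,\|T_n-F'[x^\dagger]\|$; writing $\|T_n-F'[x^\dagger]\| \le \|F_\delta'[x_n]-F_\delta'[x^\dagger]\| + \|F_\delta'[x^\dagger]-F'[x^\dagger]\|$ and invoking \eqref{eq-Lipschitz} and \eqref{eq-operr-deriv} bounds this by $\sqrt{3/2}\,\rho\,(\Lip\|e_n\| + \delta_{F'})$, producing the two contributions $\sqrt{3/2}\,\rho\,\Lip\|e_n\|$ and $\sqrt{3/2}\,\rho\,\delta_{F'}$.

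The first gap carries the real work. Since $x_{n+1}$ and $y_{n+1}$ use the same operator $T_n$ and differ only in their data ($\tilde g_\delta$ versus $T_n x^\dagger$), this is a data-stability estimate. The crude bound of Lemma \ref{thm-519}(1) would only give the factor $1/\alpha_n$, which is too weak to close the recursion; I would instead re-derive the order-optimal factor $1/\sqrt{\alpha_n}$ by repeating the argument of Proposition \ref{prop-hyp}: subtract the two optimality conditions $T_n^\ast(T_n x_{n+1}-\tilde g_\delta)+\alpha_n(x_{n+1}-x_0)+q_1=0$ and its analogue for $y_{n+1}$, test against $x_{n+1}-y_{n+1}$, drop the nonnegative subgradient term $\langle q_1-q_2,\,x_{n+1}-y_{n+1}\rangle$ coming from monotonicity of $\partial\chi_{\mc{C}}$, and complete the square to get $\|x_{n+1}-y_{n+1}\| \le \frac{1}{2\sqrt{\alpha_n}}\|\tilde g_\delta - T_n x^\dagger\|$. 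It then remains to bound the residual: using $F(x^\dagger)=g$ and the noise bounds \eqref{eqs-noisebounds}, it splits as $\tilde g_\delta - T_n x^\dagger = (g_\delta-g) + (F(x^\dagger)-F_\delta(x^\dagger)) + R$ with the Taylor remainder $R = F_\delta(x^\dagger)-F_\delta(x_n)-F_\delta'[x_n](x^\dagger-x_n) = \int_0^1 \bigl(F_\delta'[\gamma(s)]-F_\delta'[x_n]\bigr)(x^\dagger-x_n)\,ds$ taken along $\gamma(s)=x_n+s(x^\dagger-x_n)$, which lies in $\mc{C}$ by convexity and within the validity radius of \eqref{eq-Lipschitz} because $\|e_n\|\le\gamma$. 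Estimating $\|R\|$ by the Lipschitz bound gives a term of order $\Lip\|e_n\|^2$, and combining with $\delta_g+\delta_F$ and the factor $1/\sqrt{\alpha_n}$ yields the remaining contributions $\frac{1}{\sqrt{\alpha_n}}\frac{\Lip}{2}\|e_n\|^2$ and $\frac{1}{\sqrt{\alpha_n}}(\delta_g+\delta_F)$.

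I expect the two hard points to both sit in this first gap. The first is recognising that the naive $1/\alpha_n$ data estimate is insufficient and that one must exploit the variational/monotonicity structure to recover the sharp $1/\sqrt{\alpha_n}$ scaling --- without it the noise term would read $\delta_g/\alpha_n$ and the recursion would diverge as $\alpha_n\to0$. The second is the bookkeeping of the Taylor remainder: obtaining the stated constant $\Lip/2$ (rather than a larger multiple) requires estimating $F_\delta'[\gamma(s)]-F_\delta'[x_n]$ carefully along the segment via \eqref{eq-Lipschitz}, while checking throughout that $\gamma(s)$ stays in the region $\|\cdot-x^\dagger\|\le\gamma$ where the Lipschitz condition is assumed. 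The middle and final gaps, by contrast, reduce to direct citations of Proposition \ref{prop-hyp} and Lemma \ref{thm-519}(2).
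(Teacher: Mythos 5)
Your proposal is correct and follows essentially the same route as the paper's proof: the identical three-term decomposition through the intermediate minimizers $x_{\alpha_n,n}$ and $x_{\alpha_n}$, with Lemma \ref{thm-519}(2) for the approximation term, Proposition \ref{prop-hyp} for the operator-perturbation term, and a Tikhonov stability estimate combined with the Taylor remainder and the noise bounds for the data term. The only deviation is that you re-derive the $\|x_{n+1}-x_{\alpha_n,n}\|\leq \tfrac{1}{2\sqrt{\alpha_n}}\|\tilde g_\delta - T_n x^\dagger\|$ stability bound from the optimality conditions instead of citing Lemma \ref{thm-519}(1) --- a reasonable move, since \eqref{eq:stabil} as printed shows a denominator $\alpha$ (apparently a typo for $\sqrt{\alpha}$, which is what the paper's own proof actually uses), and your variational argument recovers exactly the needed $1/\sqrt{\alpha_n}$ scaling.
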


\begin{proof}
At first  we note that one can express the noisy data as $g_{\delta} = F_{\delta}(x^{\dagger}) + \xi + \epsilon$, with $\norm{\xi}\leq \delta_{F}$ and $\nl\epsilon\nr\leq \delta_{g}$. Further since $x_{n}\in\mc{C}$ and $F_{\delta}$ is Gateaux differentiable with derivatives that fulfill condition \eqref{eq-Lipschitz}, we can express $F_{\delta}(x^{\dagger})$ in a Taylor series
\begin{align}
F_{\delta}(x^{\dagger}) = F_{\delta}(x_{n}) + F_{\delta}'[x_{n}](x^{\dagger}-x_{n}) +r(x^{\dagger}-x_{n}),
\end{align}
where 
\begin{align}\label{eq-remainder}
\nl r(x^{\dagger}-x_{n})\nr\leq \frac{\Lip}{2}\nl x^{\dagger}-x_{n}\nr^{2} .
\end{align}
Thus we can rewrite the IRGNM functional \eqref{eq-IRGNMwc} of the $n$-th iteration step, defining $T_{n}:=F_{\delta}'[x_{n}]$, as
\begin{align}
& \left\| T_{n}x - \left(T_{n}x_{n}- F_{\delta}(x_{n})+ g_{\delta} \right)\right \|^2
 + \alpha_{n} \left\| x-x_{0}\right\|^2\\
 &=  \left\| T_{n}(x-x^{\dagger}) - r(x^{\dagger}-x_{n})-\xi - \epsilon\right \|^2
 + \alpha_{n} \left\| x-x_{0}\right\|^2 .
\end{align}
Now we can decompose the distance of the solution $x_{n+1}$ of the $(n+1)$-th iteration to the exact solution $x^{\dagger}$ using the triangle inequality
\begin{align}\label{eq-rec1}
\left\|x_{n+1}-x^{\dagger}\right\|\leq \left\|x_{n+1}-x_{\alpha_{n},n}\right\| +\left\|x_{\alpha_{n},n} - x_{\alpha_{n}}\right\| +\left\|x_{\alpha_{n}}-x^{\dagger}\right\| 
\end{align}
with
\begin{subequations}
\begin{align*}
x_{n+1} &\hphantom{:}= \underset{x\in\mathcal{C}}{\argmin} \left[\left\|T_{n}(x - x^\dagger) - r_{n}(x^{\dagger}-x_{n}) - \xi - \epsilon\right\|^2 + \alpha_{n}\left\|x-x_{0}\right\|^2\right]\\
x_{\alpha_{n},n}&:= \underset{x\in\mathcal{C}}{\argmin}\left[ \left\|T_{n}(x-x^\dagger) \right\|^2 + \alpha_{n}\left\|x-x_{0}\right\|^2\right]\\
x_{\alpha_{n}} &:= \underset{x\in\mathcal{C}}{\argmin}\left[ \left\|F'[x^{\dagger}](x-x^{\dagger}) \right\|^2 + \alpha_{n}\left\|x-x_{0}\right\|^2\right]  .
\end{align*}
\end{subequations}
It follows from Lemma \ref{thm-519}, part 1 that
\begin{align}
\left\|x_{n+1} - x_{\alpha_{n},n}\right\| 
\leq \frac{\left\|r_{n}(x^{\dagger}-x_{n}) + \xi + \epsilon\right\|}{\sqrt{\alpha_{n}}}.
\end{align}
With \eqref{eq-remainder}, $\norm{\xi}\leq\delta_{F}$ and $\nl\epsilon\nr\leq \delta_{g}$ we obtain
\begin{align}\label{eq-rec2}
\left\|x_{n+1} - x_{\alpha_{n},n}\right\| \leq \frac{1}{\sqrt{\alpha_{n}}}\left(\frac{\Lip}{2}\left\|x_{n}-x^{\dagger}\right\|^2+ \delta_{F} + \delta_{g}\right).
\end{align}
The second term in \eqref{eq-rec1} can be estimated using Proposition \ref{prop-hyp} with $T_{1}=T_{n}$, $T_{2}=F'[x^{\dagger}]$ and $\tilde x = x^{\dagger}$, which gives
\begin{align}
\left\|x_{\alpha_{n},n} - x_{\alpha_{n}}\right\|&\leq \sqrt{\frac{3}{2}}\rho \norm{T_{n}-F'[x^{\dagger}]}\nonumber\\
&\leq \sqrt{\frac{3}{2}}\rho \left(\norm{T_{n}-F'_{\delta}[x^{\dagger}]}+\norm{F'_{\delta}[x^{\dagger}]-F'[x^{\dagger}]}\right)\nonumber\\
&\leq \sqrt{\frac{3}{2}}\rho \left(\Lip\norm{x_{n}-x^{\dagger}} + \delta_{F'}\right),\label{eq-rec3}
\end{align}
where we used \eqref{eq-Lipschitz} and \eqref{eq-operr-deriv} to obtain the last inequality of \eqref{eq-rec3}.
For the third term in \eqref{eq-rec1} we again use Lemma \ref{thm-519}, part 2
to obtain
\begin{align}\label{eq-rec4}
\left\|x_{\alpha_{n}}-x^{\dagger}\right\| \leq \sqrt{\alpha_{n}}\rho  .
\end{align}
Combining \eqref{eq-rec1}, \eqref{eq-rec2}, \eqref{eq-rec3} and \eqref{eq-rec4} gives the assertion.
\end{proof}
Estimate \eqref{eq-lem-recursive} is of the form used in \cite{blasch97} and \cite{Hohage1997}, so we can use a similar proof now to obtain the main result.
%
%
\begin{proof}[Proof of Theorem \ref{thm-main}]
It follows from Lemma \ref{lem-recursive} that the quantities $\Theta_{n}:=\frac{\nl e_{n}\nr}{\sqrt{\alpha_{n}}}$ fulfill the inequality
\begin{eqnarray}\label{eq-recursive}
\Theta_{n+1} \leq a + b\Theta_{n} + c\Theta_{n}^2
\end{eqnarray}
with $a:=\sqrt{r}\rho$ for $\overline{\delta} = 0$ and $a:=\sqrt{r}(\rho +\frac{\delta_{g} + \delta_{F}}{\eta\overline{\delta}} + \sqrt{\frac{3}{2}}\rho\frac{\delta_{F'}}{\sqrt{\eta\overline{\delta}}})$ for $\overline{\delta}>0$, $b := \sqrt{\frac{3}{2}r}\rho \Lip $ and $c:= \sqrt{r}\frac{\Lip}{2}$. Let $t_{1}$ and $t_{2}$ be solutions to the fixed point equation $a + bt + ct^2 = t$, i.e.
\begin{eqnarray}
t_{1}:= \frac{2a}{1-b + \sqrt{(1-b)^2-4ac}}\qquad t_{2}:=\frac{1-b + \sqrt{(1-b)^2-4ac}}{2c},
\end{eqnarray}
let the stopping index $N\leq \infty$ be given by \eqref{eq-stopping} and define $C_{\Theta}:=\max(\Theta_{0},t_{1})$. We will show by induction that
\begin{eqnarray}\label{eq-Theta}
\Theta_{n}\leq C_{\Theta} 
\end{eqnarray}
for $0\leq n \leq N$ if
\begin{subequations}\label{eqs-ind}
\begin{eqnarray}
b+2\sqrt{ac} < 1,\label{eq-ind1}\\
\Theta_{0}\leq t_{2},\label{eq-ind2}\\
\sqrt{\alpha_{0}}C_{\Theta}\leq\gamma.\label{eq-ind3}%
\end{eqnarray}
\end{subequations}
Conditions \eqref{eqs-ind} are satisfied if $\rho$ is sufficiently small and $\eta$ sufficiently large. For $n=0$ \eqref{eq-Theta} is true by the definition of $C_{\Theta}$. Assume that \eqref{eq-Theta} is true for some $k<N$, 
then by \eqref{eq-ind3} and Lemma \ref{lem-recursive}, \eqref{eq-recursive} is true for $n=k$. Condition \eqref{eq-ind1} assures that $t_{1},t_{2}\in \Rset$ and $t_{1}<t_{2}$, and by \eqref{eq-Theta} one has $0\leq\Theta_{k}\leq t_{1}$ or $t_{1}\leq\Theta_{k}\leq\Theta_{0}$. In the first case, since $a,b,c \geq 0$, we obtain
\begin{eqnarray}
\Theta_{k+1} \leq a + b\Theta_{k} + c\Theta_{k}^2\leq a+bt_{1}+ct_{1}^2=t_{1}.
\end{eqnarray}
In the second  case by \eqref{eq-ind2} and the fact that $a + (b-1)t + ct^2\leq 0$ for $t_{1}\leq t\leq t_{2}$ we obtain
\begin{eqnarray}
\Theta_{k+1}\leq a + b\Theta_{k} + c\Theta_{k}^2\leq\Theta_{k}\leq \Theta_{0}.
\end{eqnarray}
Thus in both cases \eqref{eq-Theta} holds for $n=k+1$ and the induction is complete.

By definition $N=\infty$ for $\overline{\delta} = 0$ and thus \eqref{eq-Theta} directly implies \eqref{eq-noisefreerate}. Using $\alpha_{N} < \eta\overline{\delta}$ by \eqref{eq-stopping} also assertion \eqref{eq-noisyrate} follows directly from \eqref{eq-Theta}.
\end{proof}

\section{Joint reconstruction of object and phase in 4Pi-microscopy}
\label{sec-application}

Before we describe our mathematical model of the 4Pi imaging process precisely, 
let us discuss this technique in some more detail. 
Confocal fluorescence microscopy allows the reconstruction of 
three-dimensional fluorescent marker densities in living cells by
scanning a specimen at a grid of points $\{\bfx_j\in\mathbb{R}^3:j=1,\dots,N\}$. 
Laser light is focused to a small area by objective
lenses, and a pinhole is used to collect only fluorescence photons 
emitted close to the focus $\bfx_j$ (cf.\ \cite{Pawley}). 
The psf $\pconf (\bfx-\bfy)$ in \eqref{eq:conf_conv}
is the probability that a fluorescence photon emitted at $\bfy$ is detected
if the point $\bfx$ is illuminated. Data consist of photon count numbers
$G_j$, $j=1,\dots,N$, which are Poisson distributed random numbers with
mean ${\bf E} G_j = g(\bfx_j)$. 

In 4Pi microscopy the same data model holds true, but $g$ is given by
$g=\FPi(f,\phi)$ with the integral operator \eqref{eq-4Pi-op}.
For its kernel we use the more accurate model
\begin{equation}\label{eq:accurate4PiPSF}
\pfourPi({\bf z},\varphi) = |{\bf E}_1(z)- \exp(i\phi){\bf E}_2(z)|^2 h_{\rm det}({\bf z})
\end{equation}
(see \cite{vicidomini_etal:10}) instead of the simple model
\eqref{eq-psf-BCC}. Here ${\bf E}_{1,2}$ are counterpropagating focal fields 
and $h_{\rm det}$ is the detection psf, for which we used implementations
available under {\tt www.imspector.de}. 

\subsection{forward operator and its derivative}%
We first define appropriate function spaces for the integral operator
$\FPi$ in \eqref{eq-4Pi-op}. 
We assume that $f$ is supported in some cube $\Omega:=\prod_{j=1}^3[-R_j,R_j]$ 
and choose $L^2(\Omega)$ with the standard $L^2$-norm as function space for
the object $f$. We may further assume that $\pfourPi(\cdot,\varphi)$
is supported in some (typically much smaller) cube 
$\prod_{j=1}^3[-r_j,r_j]$ for all $\varphi$ such that $g$ is supported
in $\Omega':=\prod_{j=1}^3[-R_j-r_j,R_j+r_j]$. 

A reason why joint reconstruction of $f$ and $\phi$ from data $g$ 
often works even though the problem is formally underdetermined, is that
$\phi$ can be assumed to be very smooth (often it is even assumed to be
constant). Therefore
we choose the Sobolev space $H^2(\Omega')$ for $\phi$ with norm 
\(\norm{\phi}_{H^{2}(\Omega')}:=\left(\int_{\Omega'}\vert\phi\vert^{2}+\vert\nabla\phi\vert^{2}+\vert\Delta\phi\vert^{2}dx\right)^{\frac{1}{2}}\)
to achieve smooth interpolation in areas where no information on $\phi$
is contained in the data. (This is the case, e.g., in areas where $f$ is
constant. But in such areas $\phi$ is irrelevant for the primary goal to recover $f$.)

The data misfit term should reflect the distribution of data errors. 
Since our data are Poisson distributed, a natural data misfit term would be
the negative log-likelihood function, which is given by 
$\sum_{j=1}^N g(\bfx_j) - G_j\log g(\bfx_j)$. We define a piecewise constant
approximation $g^{\delta}\in L^2(\Omega')$ of the data $(G_j)$ and approximate the
negative log-likelihood by a second order Taylor expansion at $G_j$. This leads 
to a weighted $L^2$ space $\Yspace:=  L^2(\Omega',w)$ with
norm $\|g\|_{\Yspace}^2 = \int_{\Omega'} g^2(\bfx) w(\bfx) \,d\bfx$ and weight 
function 
\[
w(\bfx) = \frac{1}{2\max(g^{\delta}(\bfx),c)},\qquad \bfx\in\Omega'\,.
\]
Here $c>0$ is a small constant avoiding division by zero. As usually multiple 
weaker sources contribute to the data noise, a suitable choice of $c$ 
is the background noise level. 
Better approximations to the Poisson log-likelihood can be achieved by 
taking a Taylor expansion at $g_n = \FPi(f_n,\phi_n)$ and iterating in 
a sequential quadratic programming manner, 
but for the count rates in our experimental data this did not lead to a noticible
improvement. 

In summary, the precise definition of our forward operator is as follows:
\begin{align}\label{eq-defiFpi_precise}
\begin{aligned}
&\FPi: L^2(\Omega)\times H^2(\Omega')\longrightarrow L^2(\Omega',w)\\
&\left(\FPi(f,\phi)\right)(\bfx) := 
\int_{\Omega} \pfourPi(\bfx-\bfy,\phi(\bfx)) f(\bfy)\,d\bfy,\qquad 
\bfx\in\Omega'\,. 
\end{aligned}
\end{align}
Note that $\FPi$ does not change if $p(\cdot,\varphi)$ is replaced by its periodic
extension with period cell $\Omega'$. 

\begin{lem}\label{lem-frechderiv}
If $\pfourPi:\prod_{j=1}^3 (\mathbb{R}/(2(R_j+r_j)\mathbb{Z})) 
\times (\mathbb{R}/\pi\mathbb{Z})\to\mathbb{R}$ 
is continuous and continuously differentiable with respect to 
its last argument, then the operator $\FPi$ defined in 
\eqref{eq-defiFpi_precise} is Fr\'echet differentiable on $\Xspace$ with
\begin{align}\label{eq-frechet}
\FPi'[f,\phi](h_{f}, h_{\phi})(\bfx)=\int_\Omega \left\{ \pfourPi(\bfy -\bfx,\phi(\bfx))h_f(\bfy)  \vphantom{\frac{\partial p}{\partial \phi}} +\frac{\partial p}{\partial \phi}(\bfy - \bfx,\phi(\bfx))f(\bfy)h_\phi(\bfx) \right\} d\bfy,
\end{align}
and the adjoint of 
$F'[f,\phi]:L^2(\Omega)\times H^2(\Omega) \to L^2(\Omega',w)$ is given by 
\begin{align}\label{eq-invprob-adjoint}
\FPi'[f,\phi]^{*}g &= 
\left(\begin{array}{c}
\int_{\Omega'} \pfourPi(\cdot - \bfx,\phi(\bfx))
g(\bfx) w(\bfx) d\bfx\\
 j^{*}\left(g w \int_\Omega \frac{\partial \pfourPi}{\partial \phi}(\cdot-\bfy,
\phi(\cdot))f(\bfy) d\bfy\right)
\end{array}\right)
\end{align}
where $j:H^{2}(\Omega')\hookrightarrow L^{2}(\Omega')$ is the embedding
operator. Moreover, $\FPi'$ satisfies the Lipschitz condition 
\eqref{eq-Lipschitz}
if $\frac{\partial p}{\partial \phi}$ is uniformly Lipschitz continuous with respect to
its last argument. 
\end{lem}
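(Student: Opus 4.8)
The plan is to establish the three assertions separately, drawing repeatedly on the same small toolkit: the boundedness and uniform continuity of $\pfourPi$ and of $\partial_\phi\pfourPi$, which follow from their continuity together with the compactness of the product torus $\prod_{j=1}^3(\Rset/(2(R_j+r_j)\mathbb{Z})) \times (\Rset/(\pi\mathbb{Z}))$; the Sobolev embedding $H^2(\Omega')\hookrightarrow L^\infty(\Omega')$, valid in three space dimensions because $2>\tfrac{3}{2}$; and the boundedness $0<w\leq\tfrac{1}{2c}$ of the weight, which allows every estimate to be carried out in the unweighted $L^2(\Omega')$ norm and then transferred to $\Yspace$.

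For Fr\'echet differentiability I would form the increment $\FPi(f+h_f,\phi+h_\phi)-\FPi(f,\phi)$ and split it algebraically into the candidate linear map \eqref{eq-frechet} plus a remainder. Expanding $\pfourPi(\cdot,\phi(\bfx)+h_\phi(\bfx))-\pfourPi(\cdot,\phi(\bfx))$ by a first-order Taylor formula in the phase, the remainder consists of a term $[\pfourPi(\cdot,\phi+h_\phi)-\pfourPi(\cdot,\phi)-\partial_\phi\pfourPi(\cdot,\phi)h_\phi]f$, whose integrand is bounded by $|h_\phi(\bfx)|\,\varpi(\norm{h_\phi}_{L^\infty})$ with $\varpi$ the modulus of continuity of $\partial_\phi\pfourPi$, and a mixed term $[\pfourPi(\cdot,\phi+h_\phi)-\pfourPi(\cdot,\phi)]h_f$. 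Estimating $\norm{f}_{L^1(\Omega)}$ and $\norm{h_f}_{L^1(\Omega)}$ by the corresponding $L^2$ norms through Cauchy--Schwarz, and controlling $\norm{h_\phi}_{L^\infty}$ by $C\norm{h_\phi}_{H^2(\Omega')}$ via the embedding, both contributions are seen to be $o(\norm{(h_f,h_\phi)}_\Xspace)$. The step I expect to be most delicate is precisely this extraction of a genuine little-$o$ rather than a mere $O(\norm{(h_f,h_\phi)}_\Xspace)$: it rests on the modulus of continuity $\varpi$ tending to zero, which in turn uses the compactness of the phase circle $\Rset/(\pi\mathbb{Z})$ --- i.e.\ the periodicity of $\pfourPi$ in its phase argument that makes $\pfourPi(\cdot,\phi(\bfx))$ well defined for arbitrary real-valued $\phi$.

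For the adjoint I would pair $\FPi'[f,\phi](h_f,h_\phi)$ with $g$ in $\Yspace=L^2(\Omega',w)$, insert the weight, and use Fubini to interchange the $\bfx$- and $\bfy$-integrations. The $h_f$-contribution directly gives the first component of \eqref{eq-invprob-adjoint} as an $L^2(\Omega)$ pairing. In the $h_\phi$-contribution the factor $h_\phi(\bfx)$ pulls out of the inner $\bfy$-integral, so the term first appears as an $L^2(\Omega')$ pairing $\langle h_\phi,\psi\rangle_{L^2(\Omega')}$ with $\psi=gw\int_\Omega\partial_\phi\pfourPi(\cdot-\bfy,\phi(\cdot))f\,d\bfy$; rewriting it as $\langle h_\phi,j^{*}\psi\rangle_{H^2(\Omega')}$ through the adjoint of the embedding $j$ produces the second component. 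The only care needed here is to keep track of the two distinct Hilbert structures on the factors $L^2(\Omega)$ and $H^2(\Omega')$ of $\Xspace$.

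For the Lipschitz estimate \eqref{eq-Lipschitz} I would apply $\FPi'[f,\phi]-\FPi'[f^\dagger,\phi^\dagger]$ to an arbitrary $(h_f,h_\phi)$ and decompose the integrand into the pieces $[\pfourPi(\cdot,\phi)-\pfourPi(\cdot,\phi^\dagger)]h_f$, then $[\partial_\phi\pfourPi(\cdot,\phi)-\partial_\phi\pfourPi(\cdot,\phi^\dagger)]f\,h_\phi$, and finally $\partial_\phi\pfourPi(\cdot,\phi^\dagger)(f-f^\dagger)h_\phi$. The first is handled by the mean value theorem in the phase with constant $\norm{\partial_\phi\pfourPi}_{L^\infty}$, the second by the assumed uniform Lipschitz continuity of $\partial_\phi\pfourPi$, and the third by boundedness of $\partial_\phi\pfourPi$. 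After Cauchy--Schwarz over $\Omega$ each piece factors into one of $\norm{\phi-\phi^\dagger}_{H^2(\Omega')}$ or $\norm{f-f^\dagger}_{L^2(\Omega)}$ times $\norm{(h_f,h_\phi)}_\Xspace$, the sup-norm of $\phi-\phi^\dagger$ again being absorbed through the Sobolev embedding. Taking the supremum over $\norm{(h_f,h_\phi)}_\Xspace\leq1$ yields \eqref{eq-Lipschitz} with a constant $\Lip$ depending on $\norm{\partial_\phi\pfourPi}_{L^\infty}$, its Lipschitz constant, $|\Omega|$, $c$, and the embedding constant.
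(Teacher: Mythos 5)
Your proposal is correct and follows essentially the same route as the paper's (sketched) proof: a Taylor expansion of the kernel in the phase variable with remainder controlled via the embedding $H^2(\Omega')\hookrightarrow L^\infty(\Omega')$, computation of the $L^2$-adjoint by interchanging the order of integration followed by composition with $j^*$ to account for the $H^2$ inner product, and a direct estimate for the Lipschitz property. The only detail worth adding is that your Lipschitz constant also depends on a bound for $\norm{f}_{L^2}$ (hence on $\norm{f^\dagger}$ and $\gamma$), since the term involving the Lipschitz modulus of $\frac{\partial p}{\partial \phi}$ carries a factor of $f$; this is consistent with the local form of \eqref{eq-Lipschitz}.
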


\begin{proof}[Proof (sketch)]
The Fr\'echet differentiability of $F$ and eq.\ \eqref{eq-frechet} 
follow from a Taylor expansion of the kernel $\pfourPi$ with standard estimates
on the Taylor remainder and the continuity of the embedding 
$H^2(\Omega')\hookrightarrow L^{\infty}(\Omega')$. The adjoint 
$F'[f,\phi]^*_{L^2}$ of the continuous extension $F'[f,\phi]_{L^2}$
of $F'[f,\phi]$ to $L^2(\Omega)\times
L^2(\Omega')$ can be computed by interchanging the order of integration.
Then \eqref{eq-invprob-adjoint} follows from 
$F'[f,\phi]^* = \left(F'[f,\phi]_{L^2}\left(\begin{smallmatrix} I& 0 \\ 0 &j
\end{smallmatrix}\right)\right)^*
= \left(\begin{smallmatrix} I& 0 \\ 0 &j^*
\end{smallmatrix}\right) F'[f,\phi]_{L^2}^*$. 
The statement on Lipschitz continuity
is straightforward.
\end{proof}

The crucial observation for an efficient implementation of $\FPi$ and
$\FPi'$ is that $p$ can be separated into
\[
p(z,\varphi) = \sum_{m=-M}^M \exp(im\varphi) A_m(z)
\]
with $A_m\in L^2(\prod_{j=1}^3 (\mathbb{R}/(2(R_j+r_j)\mathbb{Z})))$.
This was observed by Baddeley et al.\ in \cite{Baddeley} for the approximation
\eqref{eq-psf-BCC} and by Vicidomini et al.\ in \cite{vicidomini_etal:10}
for the model \eqref{eq:accurate4PiPSF}. Hence, 
\[
(\FPi(f,\phi))(\bfx) = \sum_{m=-M}^M \exp(im\phi(\bfx)) \, 
\int_{\Omega'}A_m(\bfx-\bfy) f(\bfy)\, d\bfy\,,\qquad \bfx\in \Omega'\,.
\]
Here $f$ is extended by $0$ in $\Omega'\setminus \Omega$ (zero-padding). 
The convolution integrals can be evaluated efficiently using FFT.
An analogous procedure can be applied for the evaluation of
$F'[f,\phi]$ and its adjoint. 

We approximated the phase $\phi$ using tensor products of Chebychev 
polynomials, for which the Gramian matrix with respect to the $H^2$ inner 
product can be computed explicitly.

\subsection{Implementation and necessity of the nonnegativity constraint}
We solve the constrained quadratic minimization problems 
\begin{align}\label{eq-4pi-min}
(f_{n+1},\phi_{n+1}) &:= \underset{\substack{\mathclap{\vphantom{\sum^{F^{2}}}(f,\phi)\in L^{2}(\Omega)\times H^{2}(\Omega),}\\ f\geq 0 \text{ a.e.}}}{\operatorname{argmin}}\left\| \FPi'[(f_{n},\phi_{n})](f,\phi) -  g_{n}\right \|^2 + \alpha_{n} \|(f,\phi)-(f_0,\phi_0)\|^2,
\end{align}
with 
\[g_{n}:=\FPi'[(f_{n},\phi_{n})]((f_{n},\phi_{n})) - \FPi((f_{n},\phi_{n})) + g_{\delta}
\]
 using the semi-smooth Newton method (cf.\ \cite{Hintermueller2003}). %
In each step of this method an unconstrained, positive definite linear system
has to be solved, which is done by the conjugate gradient method. In Figure \ref{fig-lv-rec} the reconstruction of a fluorophore density and the phase from a 2d-slice of real 4Pi data is shown. %
\begin{figure}[!ht]%
\def\fscb{0.58}
\def\fsca{1} 
\def\optx{2}%
\def\optya{-1.5}%
\def\optl{1.16}%
\def\rectxa{5.1}%
\def\rectxl{1.1}%
\def\rectya{-2.5}%
\def\rectyl{5.2}%
\begin{center}%
\subfloat[slice of 4Pi-data $g_{\delta}$\label{sfig-lvdata}]{%
\begin{tikzpicture}[scale = \fscb]%
\pgftext[]{%
\rotatebox{0}{\includegraphics[scale=\fsca]{./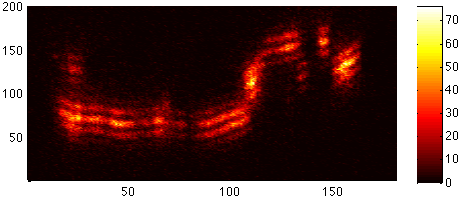}}
}%
\end{tikzpicture}%
}%
\subfloat[psf $\pfourPi(\cdot,0)$\label{sfig-psf}]{%
\begin{tikzpicture}[scale = \fscb]%
\pgftext[]{%
\rotatebox{0}{\includegraphics[scale=\fsca]{./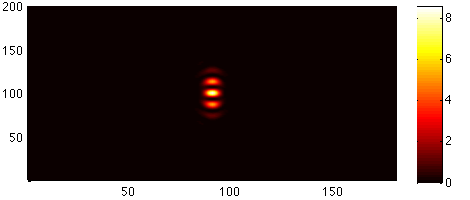}}%
}%
\draw[color=white,line width=1pt,|-|] plot coordinates{(\optx,\optya) (\optx+\optl,\optya)};%
\draw[text=white] {(\optx+\optl*0.5,\optya)} node[below] {{\footnotesize $800$ nm}};%
\draw[color=white,line width=1pt,->, -latex] plot coordinates{(4,2) (4,-1)};%
\fill [white] (\rectxa,\rectya) rectangle (\rectxa+\rectxl,\rectya+\rectyl);%
\end{tikzpicture}%
}\\%
\subfloat[object reconstruction $f_{12}$\label{sfig-obj}]{%
\begin{tikzpicture}[scale = \fscb]%
\pgftext[]{%
\rotatebox{0}{\includegraphics[scale=\fsca]{./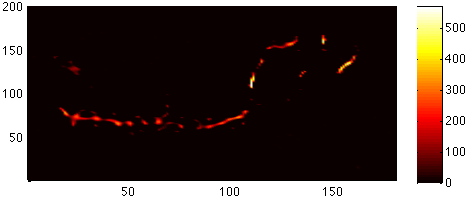}}%
}%
\end{tikzpicture}%
}%
\subfloat[phase reconstruction $\phi_{12}$\label{sfig-phase}]{%
\begin{tikzpicture}[scale = \fscb]%
\pgftext[]{%
\rotatebox{0}{\includegraphics[scale=\fsca]{./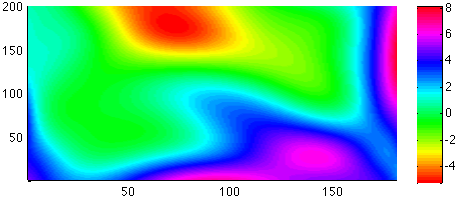}}%
}%
\end{tikzpicture}%
}%
\end{center}%
\caption{Panel \subref{sfig-lvdata} shows a slice of real 3-dimensional 4Pi-data, where in the approximate center the waves were interfering destructively. From these data the reconstructions of object \subref{sfig-obj} and phase \subref{sfig-phase} have been obtained with the constrained IRGNM for $f_{0}=0$ and $\phi_{0}=0$. The reconstruction of the phase reflects the constructive interference on the left and right side of the data and the destructive interference in the center. The modeled psf (for constructive interference) is depicted in panel \subref{sfig-psf} together with indications on the scale and the optical axis (represented by the arrow). To reconstruct the phase we used a basis of polynomials with maximal degree $7$ in each dimension.\label{fig-lv-rec}}%
\end{figure}%
\begin{figure}[!ht]%
\def\fscb{0.58}
\def\fsca{1} 
\def\optx{2}%
\def\optya{-1.5}%
\def\optl{1.16}%
\begin{center}%
\subfloat[object reconstruction $f_{12}^{\rm u}$
\label{sfig-ucobj}]{%
\begin{tikzpicture}[scale = \fscb]%
\pgftext[]{%
\rotatebox{0}{\includegraphics[scale=\fsca]{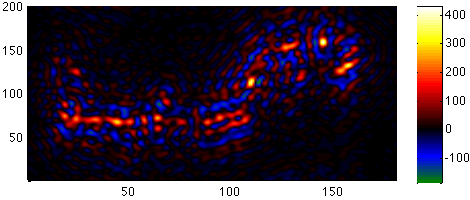}}%
}%
\end{tikzpicture}%
}%
\subfloat[phase reconstruction $\phi_{12}^{\rm u}$
\label{sfig-ucphase}]{%
\begin{tikzpicture}[scale = \fscb]%
\pgftext[]{%
\rotatebox{0}{\includegraphics[scale=\fsca]{./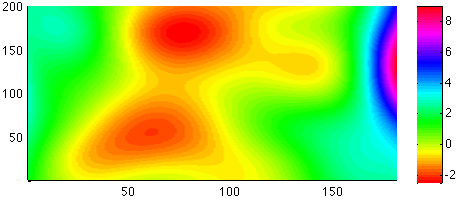}}%
}%
\end{tikzpicture}%
}\\%
\subfloat[object reconstruction $f_{12}^{\rm p}$
\label{sfig-projobj}]{%
\begin{tikzpicture}[scale = \fscb]%
\pgftext[]{%
\rotatebox{0}{\includegraphics[scale=\fsca]{./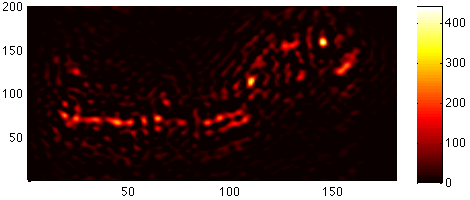}}%
}%
\end{tikzpicture}%
}%
\subfloat[phase reconstruction $\phi_{12}^{\rm p}$
\label{sfig-projphase}]{%
\begin{tikzpicture}[scale = \fscb]%
\pgftext[]{%
\rotatebox{0}{\includegraphics[scale=\fsca]{./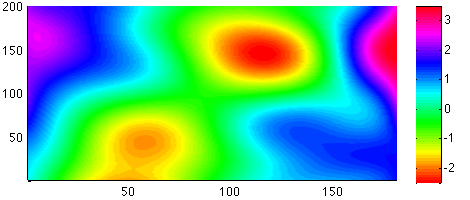}}%
}%
\end{tikzpicture}%
}
\end{center}%
\caption{Panels \subref{sfig-ucobj} and \subref{sfig-ucphase} show reconstructions of object and phase respectively from the data shown in Figure \ref{sfig-lvdata}, which have been obtained by the unconstrained IRGNM. 
The phase is very badly reconstructed which leads to remaining sidelobes in the 
object reconstruction. This is most eminent in the center where the psf features destructive interference. The same deficiencies can be observed
in panels \subref{sfig-projobj} and  \subref{sfig-projphase}, where 
the nonnegativity constraint has been incorporated  by a simple projection
after each step. 
All of the reconstructions depicted in Figures \ref{fig-lv-rec} 
and \ref{fig-lv-recs} were performed with the same regularization parameters.\label{fig-lv-recs}}%
\end{figure}%
To demonstrate the necessity to incorporate nonnegativity constraint into 
the minimization problem, in Figure \ref{fig-lv-recs} we display reconstructions from the same data, without constraint and with simply projecting onto $\mc{C}$
after an unconstrained IRGNM step, i.e.\ for the iteration schemes 
\begin{align*}
(f_{n+1}^{\rm u},\phi_{n+1}^{\rm u}) &
:= \underset{\mathclap{\vphantom{\sum^{F^{2}}}(f,\phi)\in L^{2}(\Omega)\times H^{2}(\Omega)}}{\operatorname{argmin}}
\left[\left\| \FPi'[(f_{n}^{\rm u},\phi_{n}^{\rm u})](f,\phi) -  g_{n}\right \|^2 + \alpha_{n} \|(f,\phi)-(f_0,\phi_0\|^2\right],
\\
(f_{n+1}^{\rm p},\phi_{n+1}^{\rm p}) &:= P_{\mc{C}}\underset{\mathclap{\vphantom{\sum^{F^{2}}}(f,\phi)\in L^{2}(\Omega)\times H^{2}(\Omega)}}{\operatorname{argmin}}\left[\left\| \FPi'[(f_{n}^{\rm p},\phi_{n}^{\rm p})](f,\phi) -  g_{n}\right \|^2 + \alpha_{n} \|(f,\phi)-(f_0,\phi_0)\|^2\right].
\end{align*}
Here the metric projection is given by $P_{\mc{C}}(f,\phi) = (\max(f,0),\phi)$. 
Comparing the reconstructions of Figures \ref{fig-lv-rec} and \ref{fig-lv-recs} it is 
obvious that the incorporation of the nonnegativity constraint in the 
minimization problem is necessary for accurate reconstructions of the phase.

A further option pursued in \cite{vicidomini_etal:10} is to update
$f$ and $\phi$ in alternating manner such that in each update step for $f$
a constrained minimization problem for $f$ only instead of both $f$ and $\phi$
has to be solved. However, such a procedure requires significantly more iteration steps. 

\subsection{Results for simulated and experimental data}
\captionsetup[subfloat]{margin=0ex}%
\begin{figure}[!ht]%
\def\fscb{0.37}
\def\fscc{\fscb*0.308/0.37}
\def\fsca{1} 
\def\optx{-0.2}%
\def\optxr{-\optx}%
\def\optya{-4}%
\def\hspcorr{1}%
\def\xstart{-6.4}%
\def\ystart{-4.76}%
\def\ycorr{\ystart+0.8}%
\def\xstart{-6.55}%
\def\ylength{0.97}%
\begin{center}%
\subfloat[simulated object $f^{\dagger}$\label{sfig-filsimobj}]{%
\begin{tikzpicture}[scale = \fscb]%
\pgftext[]{%
\rotatebox{0}{\includegraphics[scale=\fsca]{./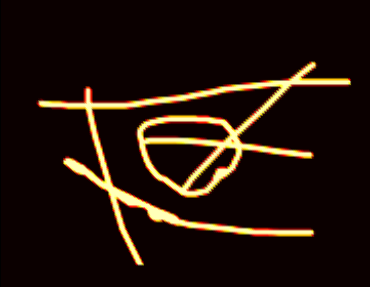}}%
}%
\draw[text=black] (\optxr,\optya) node[right] {{\footnotesize \vphantom{$0$}}};%
\end{tikzpicture}%
}%
\subfloat{%
\begin{tikzpicture}[scale = \fscb]%
\pgftext[]{%
\rotatebox{0}{\includegraphics[scale=\fsca]{./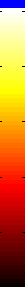}}%
}%
\def\optl{1.54}%
\draw[text=black] (\optxr,\optya) node[right] {{\footnotesize $0$}};%
\draw[text=black] (\optxr,\optya+\optl) node[right] {{\footnotesize $50$}};%
\draw[text=black] (\optxr,\optya+2*\optl) node[right] {{\footnotesize $100$}};%
\draw[text=black] (\optxr,\optya+3*\optl) node[right] {{\footnotesize $150$}};%
\draw[text=black] (\optxr,\optya+4*\optl) node[right] {{\footnotesize $200$}};%
\draw[text=black] (\optxr,\optya+5*\optl) node[right] {{\footnotesize $250$}};%
\end{tikzpicture}%
}%
\addtocounter{subfigure}{-1}%
\subfloat[simulated phase $\phi^{\dagger}$\label{sfig-filsimphase}]{%
\begin{tikzpicture}[scale = \fscb]%
\pgftext[]{%
\rotatebox{0}{\includegraphics[scale=\fsca]{./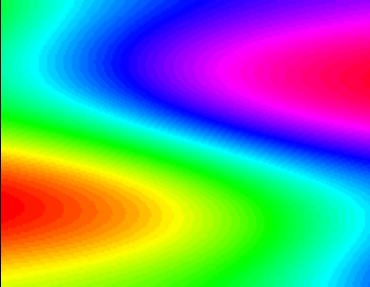}}%
}%
\draw[text=black] (\optxr,\optya) node[right] {{\footnotesize \vphantom{$0$}}};%
\end{tikzpicture}%
}%
\subfloat{%
\begin{tikzpicture}[scale = \fscb]%
\pgftext[]{%
\rotatebox{0}{\includegraphics[scale=\fsca]{./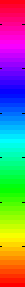}}%
}%
\draw[text=black] (\optxr,\optya) node[right] {{\footnotesize \vphantom{$0$}}};%
\def\optl{1.25}%
\def\optya{-2.9}%
\draw[text=black] (\optxr,\optya) node[right] {{\footnotesize $-2$}};%
\draw[text=black] (\optxr,\optya+\optl) node[right] {{\footnotesize $-1$}};%
\draw[text=black] (\optxr,\optya+2*\optl) node[right] {{\footnotesize $0$}};%
\draw[text=black] (\optxr,\optya+3*\optl) node[right] {{\footnotesize $1$}};%
\draw[text=black] (\optxr,\optya+4*\optl) node[right] {{\footnotesize $2$}};%
\draw[text=black] (\optxr,\optya+5*\optl) node[right] {{\footnotesize $3$}};%
\end{tikzpicture}%
}%
\addtocounter{subfigure}{-1}%
\subfloat[noisy data $g_{\delta}$\label{sfig-filsimdata}]{%
\begin{tikzpicture}[scale = \fscb]%
\pgftext[]{%
\rotatebox{0}{\includegraphics[scale=\fsca]{./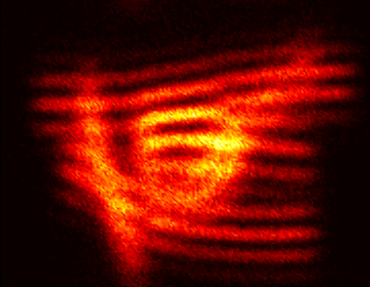}}%
}%
\draw[text=black] (\optxr,\optya) node[right] {{\footnotesize \vphantom{$0$}}};%
\draw[color=white,line width=1pt,->, -latex] plot coordinates{(-4.5,0) (-4.5,-3)};%
\end{tikzpicture}%
}%
\subfloat{%
\def\optl{1.36}%
\begin{tikzpicture}[scale = \fscb]%
\pgftext[]{%
\rotatebox{0}{\includegraphics[scale=\fsca]{./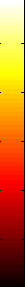}}%
}%
\draw[text=black] (\optxr,\optya) node[right] {{\footnotesize $0$}};%
\draw[text=black] (\optxr,\optya+\optl) node[right] {{\footnotesize $20$}};%
\draw[text=black] (\optxr,\optya+2*\optl) node[right] {{\footnotesize $40$}};%
\draw[text=black] (\optxr,\optya+3*\optl) node[right] {{\footnotesize $60$}};%
\draw[text=black] (\optxr,\optya+4*\optl) node[right] {{\footnotesize $80$}};%
\draw[text=black] (\optxr,\optya+5*\optl) node[right] {{\footnotesize $100$}};%
\end{tikzpicture}%
}\\%
\addtocounter{subfigure}{-1}%
\subfloat[object reconstruction $f_{27}$ for noisy data\label{sfig-filobjrec}]{%
\begin{tikzpicture}[scale = \fscb]%
\pgftext[]{%
\rotatebox{0}{\includegraphics[scale=\fsca]{./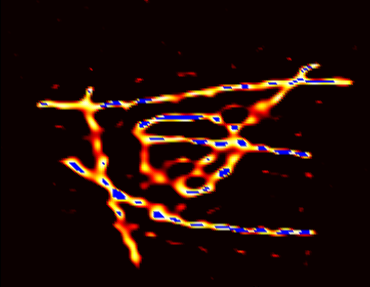}}%
}%
\draw[text=black] (\optxr,\optya) node[right] {{\footnotesize \vphantom{$0$}}};%
\draw[text=black] (0,\ycorr) node[below] {{\footnotesize\vphantom{$0$}}};%
\end{tikzpicture}%
}%
\subfloat{%
\begin{tikzpicture}[scale = \fscb]%
\pgftext[]{%
\rotatebox{0}{\includegraphics[scale=\fsca]{./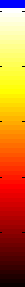}}%
}%
\def\optl{1.54}%
\draw[text=black] (0,\ycorr) node[below] {{\footnotesize\vphantom{$0$}}};%
\draw[text=black] (\optxr,\optya) node[right] {{\footnotesize $0$}};%
\draw[text=black] (\optxr,\optya+\optl) node[right] {{\footnotesize $50$}};%
\draw[text=black] (\optxr,\optya+2*\optl) node[right] {{\footnotesize $100$}};%
\draw[text=black] (\optxr,\optya+3*\optl) node[right] {{\footnotesize $150$}};%
\draw[text=black] (\optxr,\optya+4*\optl) node[right] {{\footnotesize $200$}};%
\draw[text=black] (\optxr,\optya+5*\optl) node[right] {{\footnotesize $250$}};%
\end{tikzpicture}%
}%
\addtocounter{subfigure}{-1}%
\subfloat[phase reconstruction $\phi_{27}$ for noisy data\label{sfig-filphaserec}]{%
\begin{tikzpicture}[scale = \fscb]%
\pgftext[]{%
\rotatebox{0}{\includegraphics[scale=\fsca]{./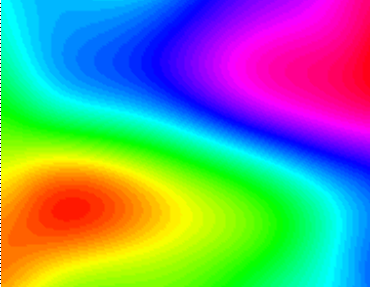}}%
}%
\draw[text=black] (\optxr,\optya) node[right] {{\footnotesize \vphantom{$0$}}};%
\draw[text=black] (0,\ycorr) node[below] {{\footnotesize\vphantom{$0$}}};%
\end{tikzpicture}%
}%
\subfloat{%
\begin{tikzpicture}[scale = \fscb]%
\pgftext[]{%
\rotatebox{0}{\includegraphics[scale=\fsca]{./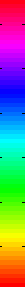}}%
}%
\draw[text=black] (\optxr,\optya) node[right] {{\footnotesize \vphantom{$0$}}};%
\draw[text=black] (0,\ycorr) node[below] {{\footnotesize\vphantom{$0$}}};%
\def\optl{1.25}%
\def\optya{-2.9}%
\draw[text=black] (\optxr,\optya) node[right] {{\footnotesize $-2$}};%
\draw[text=black] (\optxr,\optya+\optl) node[right] {{\footnotesize $-1$}};%
\draw[text=black] (\optxr,\optya+2*\optl) node[right] {{\footnotesize $0$}};%
\draw[text=black] (\optxr,\optya+3*\optl) node[right] {{\footnotesize $1$}};%
\draw[text=black] (\optxr,\optya+4*\optl) node[right] {{\footnotesize $2$}};%
\draw[text=black] (\optxr,\optya+5*\optl) node[right] {{\footnotesize $3$}};%
\end{tikzpicture}%
}%
\addtocounter{subfigure}{-1}%
\subfloat[errors and residual for noisy data\label{sfig-filresid}]{%
\begin{tikzpicture}[scale = \fscc]%
\pgftext[]{%
\rotatebox{0}{\includegraphics[scale=\fsca]{./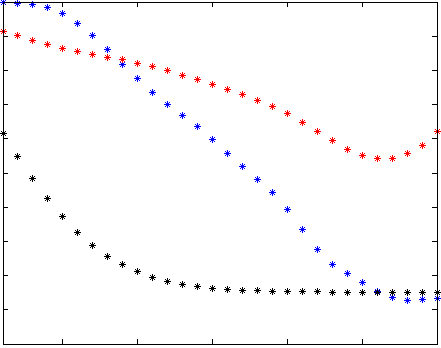}}%
}%
\def\oxr{0.3}%
\def\oya{4.3}%
\def\oline{0.6}%
\def\parone{1}%
\def\partwo{1.8}%
\def\xlength{2.09}%
\draw[text=black] (\optxr,\optya-0.8) node[right] {{\footnotesize \vphantom{$0$}}};%
\draw[text=black] (\oxr,\oya) node[right] {\parbox{\parone cm}{{\tiny residual}}};%
\draw[text=black] (\oxr-1,\oya) node[right] {\parbox{\parone cm}{{\tiny $*$}}};%
\draw[text=red] (\oxr,\oya-\oline) node[right] {\parbox{\partwo cm}{{\tiny object error}}};%
\draw[text=red] (\oxr-1,\oya-\oline) node[right] {\parbox{\partwo cm}{{\tiny $*$}}};%
\draw[text=blue] (\oxr,\oya-2*\oline) node[right] {\parbox{\partwo cm}{{\tiny phase error}}};%
\draw[text=blue] (\oxr-1,\oya-2*\oline) node[right] {\parbox{\partwo cm}{{\tiny $*$}}};%
\draw[text=black] (\xstart,\ystart) node[left] {{\footnotesize$0$}};%
\draw[text=black] (\xstart,\ystart+\ylength) node[left] {{\footnotesize$0.1$}};%
\draw[text=black] (\xstart,\ystart+2*\ylength) node[left] {{\footnotesize$0.2$}};%
\draw[text=black] (\xstart,\ystart+3*\ylength) node[left] {{\footnotesize$0.3$}};%
\draw[text=black] (\xstart,\ystart+4*\ylength) node[left] {{\footnotesize$0.4$}};%
\draw[text=black] (\xstart,\ystart+5*\ylength) node[left] {{\footnotesize$0.5$}};%
\draw[text=black] (\xstart,\ystart+6*\ylength) node[left] {{\footnotesize$0.6$}};%
\draw[text=black] (\xstart,\ystart+7*\ylength) node[left] {{\footnotesize$0.7$}};%
\draw[text=black] (\xstart,\ystart+8*\ylength) node[left] {{\footnotesize$0.8$}};%
\draw[text=black] (\xstart,\ystart+9*\ylength) node[left] {{\footnotesize$0.9$}};%
\draw[text=black] (\xstart,\ystart+10*\ylength) node[left] {{\footnotesize$1$}};%
\draw[text=black] (\xstart+\xlength,\ystart) node[below] {{\footnotesize$5$}};%
\draw[text=black] (\xstart+2*\xlength,\ystart) node[below] {{\footnotesize$10$}};%
\draw[text=black] (\xstart+3*\xlength,\ystart) node[below] {{\footnotesize$15$}};%
\draw[text=black] (\xstart+4*\xlength,\ystart) node[below] {{\footnotesize$20$}};%
\draw[text=black] (\xstart+5*\xlength,\ystart) node[below] {{\footnotesize$25$}};%
\draw[text=black] (\xstart+6*\xlength,\ystart) node[below] {{\footnotesize$30$}};%
\end{tikzpicture}%
}\\%
\subfloat[object reconstruction $f_{36}$ for exact data\label{sfig-filexobjrec}]{%
\begin{tikzpicture}[scale = \fscb]%
\pgftext[]{%
\rotatebox{0}{\includegraphics[scale=\fsca]{./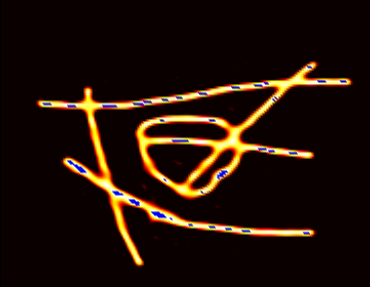}}%
}%
\draw[text=black] (\optxr,\optya) node[right] {{\footnotesize \vphantom{$0$}}};%
\draw[text=black] (0,\ycorr) node[below] {{\footnotesize\vphantom{$0$}}};%
\end{tikzpicture}%
}%
\subfloat{%
\begin{tikzpicture}[scale = \fscb]%
\pgftext[]{%
\rotatebox{0}{\includegraphics[scale=\fsca]{./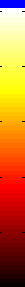}}%
}%
\def\optl{1.54}%
\draw[text=black] (0,\ycorr) node[below] {{\footnotesize\vphantom{$0$}}};%
\draw[text=black] (\optxr,\optya) node[right] {{\footnotesize $0$}};%
\draw[text=black] (\optxr,\optya+\optl) node[right] {{\footnotesize $50$}};%
\draw[text=black] (\optxr,\optya+2*\optl) node[right] {{\footnotesize $100$}};%
\draw[text=black] (\optxr,\optya+3*\optl) node[right] {{\footnotesize $150$}};%
\draw[text=black] (\optxr,\optya+4*\optl) node[right] {{\footnotesize $200$}};%
\draw[text=black] (\optxr,\optya+5*\optl) node[right] {{\footnotesize $250$}};%
\end{tikzpicture}%
}%
\addtocounter{subfigure}{-1}%
\subfloat[phase reconstruction $\phi_{36}$ for exact data\label{sfig-filexphaserec}]{%
\begin{tikzpicture}[scale = \fscb]%
\pgftext[]{%
\rotatebox{0}{\includegraphics[scale=\fsca]{./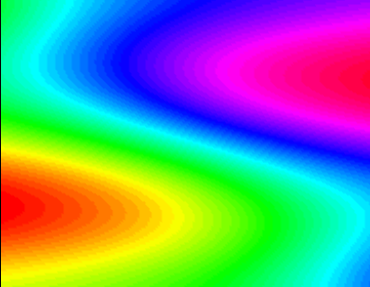}}%
}%
\draw[text=black] (\optxr,\optya) node[right] {{\footnotesize \vphantom{$0$}}};%
\draw[text=black] (0,\ycorr) node[below] {{\footnotesize\vphantom{$0$}}};%
\end{tikzpicture}%
}%
\subfloat{%
\begin{tikzpicture}[scale = \fscb]%
\pgftext[]{%
\rotatebox{0}{\includegraphics[scale=\fsca]{./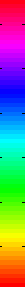}}%
}%
\draw[text=black] (\optxr,\optya) node[right] {{\footnotesize \vphantom{$0$}}};%
\draw[text=black] (0,\ycorr) node[below] {{\footnotesize\vphantom{$0$}}};%
\def\optl{1.25}%
\def\optya{-2.9}%
\draw[text=black] (\optxr,\optya) node[right] {{\footnotesize $-2$}};%
\draw[text=black] (\optxr,\optya+\optl) node[right] {{\footnotesize $-1$}};%
\draw[text=black] (\optxr,\optya+2*\optl) node[right] {{\footnotesize $0$}};%
\draw[text=black] (\optxr,\optya+3*\optl) node[right] {{\footnotesize $1$}};%
\draw[text=black] (\optxr,\optya+4*\optl) node[right] {{\footnotesize $2$}};%
\draw[text=black] (\optxr,\optya+5*\optl) node[right] {{\footnotesize $3$}};%
\end{tikzpicture}%
}%
\addtocounter{subfigure}{-1}%
\subfloat[errors and residual for exact data\label{sfig-ex-resid}]{%
\begin{tikzpicture}[scale = \fscc]%
\pgftext[]{%
\rotatebox{0}{\includegraphics[scale=\fsca]{./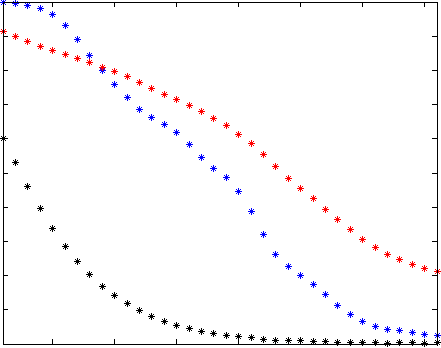}}%
}%
\def\oxr{0.3}%
\def\oya{4.3}%
\def\oline{0.6}%
\def\parone{1}%
\def\partwo{1.8}%
\def\xlength{1.73}%
\draw[text=black] (\optxr,\optya-0.8) node[right] {{\footnotesize \vphantom{$0$}}};%
\draw[text=black] (\oxr,\oya) node[right] {\parbox{\parone cm}{{\tiny residual}}};%
\draw[text=black] (\oxr-1,\oya) node[right] {\parbox{\parone cm}{{\tiny $*$}}};%
\draw[text=red] (\oxr,\oya-\oline) node[right] {\parbox{\partwo cm}{{\tiny object error}}};%
\draw[text=red] (\oxr-1,\oya-\oline) node[right] {\parbox{\partwo cm}{{\tiny $*$}}};%
\draw[text=blue] (\oxr,\oya-2*\oline) node[right] {\parbox{\partwo cm}{{\tiny phase error}}};%
\draw[text=blue] (\oxr-1,\oya-2*\oline) node[right] {\parbox{\partwo cm}{{\tiny $*$}}};%
\draw[text=black] (\xstart,\ystart) node[left] {{\footnotesize$0$}};%
\draw[text=black] (\xstart,\ystart+\ylength) node[left] {{\footnotesize$0.1$}};%
\draw[text=black] (\xstart,\ystart+2*\ylength) node[left] {{\footnotesize$0.2$}};%
\draw[text=black] (\xstart,\ystart+3*\ylength) node[left] {{\footnotesize$0.3$}};%
\draw[text=black] (\xstart,\ystart+4*\ylength) node[left] {{\footnotesize$0.4$}};%
\draw[text=black] (\xstart,\ystart+5*\ylength) node[left] {{\footnotesize$0.5$}};%
\draw[text=black] (\xstart,\ystart+6*\ylength) node[left] {{\footnotesize$0.6$}};%
\draw[text=black] (\xstart,\ystart+7*\ylength) node[left] {{\footnotesize$0.7$}};%
\draw[text=black] (\xstart,\ystart+8*\ylength) node[left] {{\footnotesize$0.8$}};%
\draw[text=black] (\xstart,\ystart+9*\ylength) node[left] {{\footnotesize$0.9$}};%
\draw[text=black] (\xstart,\ystart+10*\ylength) node[left] {{\footnotesize$1$}};%
\def\xcorr{-0.1}%
\draw[text=black] (\xstart-\xcorr+\xlength,\ystart) node[below] {{\footnotesize$5$}};%
\draw[text=black] (\xstart-\xcorr+2*\xlength,\ystart) node[below] {{\footnotesize$10$}};%
\draw[text=black] (\xstart-\xcorr+3*\xlength,\ystart) node[below] {{\footnotesize$15$}};%
\draw[text=black] (\xstart-\xcorr+4*\xlength,\ystart) node[below] {{\footnotesize$20$}};%
\draw[text=black] (\xstart-\xcorr+5*\xlength,\ystart) node[below] {{\footnotesize$25$}};%
\draw[text=black] (\xstart-\xcorr+6*\xlength,\ystart) node[below] {{\footnotesize$30$}};%
\draw[text=black] (\xstart-\xcorr+7*\xlength,\ystart) node[below] {{\footnotesize$35$}};%
\end{tikzpicture}%
}%
\end{center}%
\caption{Panels \subref{sfig-filsimobj} and \subref{sfig-filsimphase} show a simulated object and phase, and panel \subref{sfig-filsimdata} the corresponding 4Pi-data perturbed with Poisson-noise. Panels \subref{sfig-filobjrec} and \subref{sfig-filphaserec} show the reconstructions of object and phase respectively from this noisy data obtained with the constrained IRGNM. In panel \subref{sfig-filresid} the residual $\|g_{\delta}-F(f_n,\phi_n)\|_{L^2}$, the object error
$\|f_n-f^{\dagger}\|_{L^2}$, and the phase error $\|\phi_n-\phi^{\dagger}\|_{L^2}$ 
are plotted over the iteration index $n$. 
Panels \subref{sfig-filexobjrec}--\subref{sfig-ex-resid} are
analogous to panels \subref{sfig-filobjrec}--\subref{sfig-filresid} 
with noisy data $g_{\delta}$ replaced by exact data $F(f^{\dagger},\phi^{\dagger})$. 
\label{fig-filsimdata}}%
\end{figure}%
\captionsetup[subfloat]{margin=0ex}
Figure \ref{fig-filsimdata} shows reconstructions from simulated 
two-dimensional noisy and exact data. Here we chose polynomials of maximal 
degree 7 in each dimension to approximate the reconstructed phase. 
We chose the exact phase as a shifted sum of a sine and arctan function, 
which does not belong to the polynomial subspace. 
For exact data the sidelobes are removed completely, and for noisy data at
the given count rate only very little of the sidelobes is left in the
reconstruction. The required number of 
semi-smooth Newton (SSN) steps increases with $n$. To give an idea, 
we mention that less than 8 SSN steps were needed
for $n\leq 21$ with less than 80 CG steps in each SSN step,
and for $n=30$ the algorithm required 49 SSN steps with less than 
600 CG steps. We must say that the stopping indices for the 
Gau{\ss}-Newton iteration are chosen somewhat arbitrarily in this paper. 
The development of a good stopping rule for the kind of errors considered
in this paper, nonlinear operators and convex constraints is an interesting 
topic for future research. 

\captionsetup[subfloat]{margin=0ex}%
\begin{figure}[!ht]%
\def\fscb{0.37}
\def\fscc{\fscb*0.308/0.37}
\def\fsca{1} 
\def\optx{-0.2}%
\def\optxr{-\optx}%
\def\optya{-4.25}%
\def\hspcorr{1}%
\def\xstart{-6}%
\def\ystart{-4.76}%
\def\ycorr{\ystart+0.8}%
\def\ylength{1.06}%
\def\xstart{-6.2}%
\def\xlength{2.05}%
\begin{center}%
\subfloat[simulated object $f^{\dagger}$\label{sfig-constobject}]{%
\begin{tikzpicture}[scale = \fscb]%
\pgftext[]{%
\rotatebox{0}{\includegraphics[scale=\fsca]{./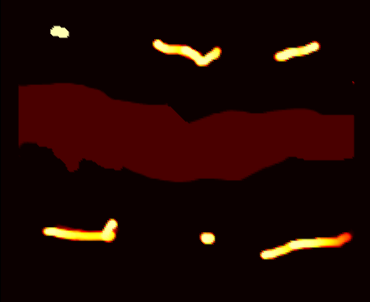}}%
}%
\draw[text=black] (\optxr,\optya) node[right] {{\footnotesize \vphantom{$0$}}};%
\end{tikzpicture}%
}%
\subfloat{%
\begin{tikzpicture}[scale = \fscb]%
\pgftext[]{%
\rotatebox{0}{\includegraphics[scale=\fsca]{./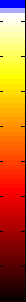}}%
}%
\def\optl{0.99}%
\draw[text=black] (\optxr,\optya) node[right] {{\footnotesize $0$}};%
\draw[text=black] (\optxr,\optya+\optl) node[right] {{\footnotesize $50$}};%
\draw[text=black] (\optxr,\optya+2*\optl) node[right] {{\footnotesize $100$}};%
\draw[text=black] (\optxr,\optya+3*\optl) node[right] {{\footnotesize $150$}};%
\draw[text=black] (\optxr,\optya+4*\optl) node[right] {{\footnotesize $200$}};%
\draw[text=black] (\optxr,\optya+5*\optl) node[right] {{\footnotesize $250$}};%
\draw[text=black] (\optxr,\optya+6*\optl) node[right] {{\footnotesize $300$}};%
\draw[text=black] (\optxr,\optya+7*\optl) node[right] {{\footnotesize $350$}};%
\draw[text=black] (\optxr,\optya+8*\optl) node[right] {{\footnotesize $400$}};%
\end{tikzpicture}%
}%
\addtocounter{subfigure}{-1}%
\subfloat[simulated phase $\phi^{\dagger}$\label{sfig-constphiex}]{%
\begin{tikzpicture}[scale = \fscb]%
\pgftext[]{%
\rotatebox{0}{\includegraphics[scale=\fsca]{./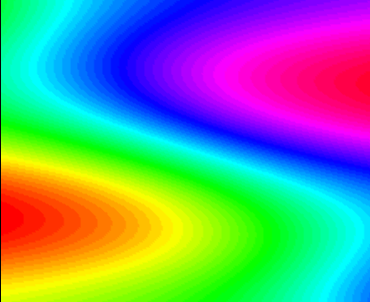}}%
}%
\draw[text=black] (\optxr,\optya) node[right] {{\footnotesize \vphantom{$0$}}};%
\end{tikzpicture}%
}%
\subfloat{%
\begin{tikzpicture}[scale = \fscb]%
\pgftext[]{%
\rotatebox{0}{\includegraphics[scale=\fsca]{./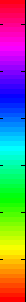}}%
}%
\draw[text=black] (\optxr,\optya) node[right] {{\footnotesize \vphantom{$0$}}};%
\def\optl{1.345}%
\def\optya{-3.1}%
\draw[text=black] (\optxr,\optya) node[right] {{\footnotesize $-2$}};%
\draw[text=black] (\optxr,\optya+\optl) node[right] {{\footnotesize $-1$}};%
\draw[text=black] (\optxr,\optya+2*\optl) node[right] {{\footnotesize $0$}};%
\draw[text=black] (\optxr,\optya+3*\optl) node[right] {{\footnotesize $1$}};%
\draw[text=black] (\optxr,\optya+4*\optl) node[right] {{\footnotesize $2$}};%
\draw[text=black] (\optxr,\optya+5*\optl) node[right] {{\footnotesize $3$}};%
\end{tikzpicture}%
}%
\addtocounter{subfigure}{-1}%
\subfloat[noisy data $g_{\delta}$ \label{sfig-constdata}]{%
\begin{tikzpicture}[scale = \fscb]%
\pgftext[]{%
\rotatebox{0}{\includegraphics[scale=\fsca]{./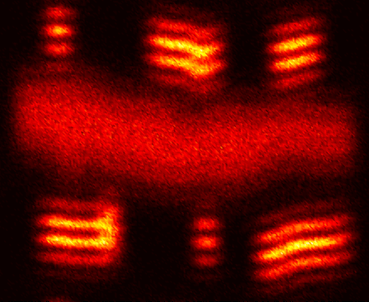}}%
}%
\draw[text=black] (\optxr,\optya) node[right] {{\footnotesize \vphantom{$0$}}};%
\draw[color=white,line width=1pt,->, -latex] plot coordinates{(-4.8,-0.5) (-4.8,-3.5)};%
\end{tikzpicture}%
}%
\subfloat{%
\def\optl{1.34}%
\begin{tikzpicture}[scale = \fscb]%
\pgftext[]{%
\rotatebox{0}{\includegraphics[scale=\fsca]{./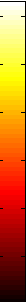}}%
}%
\draw[text=black] (\optxr,\optya) node[right] {{\footnotesize $0$}};%
\draw[text=black] (\optxr,\optya+\optl) node[right] {{\footnotesize $20$}};%
\draw[text=black] (\optxr,\optya+2*\optl) node[right] {{\footnotesize $40$}};%
\draw[text=black] (\optxr,\optya+3*\optl) node[right] {{\footnotesize $60$}};%
\draw[text=black] (\optxr,\optya+4*\optl) node[right] {{\footnotesize $80$}};%
\draw[text=black] (\optxr,\optya+5*\optl) node[right] {{\footnotesize $100$}};%
\draw[text=black] (\optxr,\optya+6*\optl) node[right] {{\footnotesize $120$}};%
\end{tikzpicture}%
}\\%
\addtocounter{subfigure}{-1}%
\subfloat[object reconstruction $f_{14}$\label{sfig-constobjrec}]{%
\begin{tikzpicture}[scale = \fscb]%
\pgftext[]{%
\rotatebox{0}{\includegraphics[scale=\fsca]{./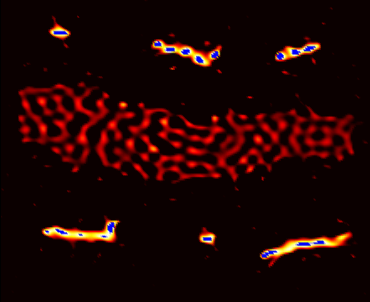}}%
}%
\draw[text=black] (\optxr,\optya) node[right] {{\footnotesize \vphantom{$0$}}};%
\end{tikzpicture}%
}%
\subfloat{%
\begin{tikzpicture}[scale = \fscb]%
\pgftext[]{%
\rotatebox{0}{\includegraphics[scale=\fsca]{./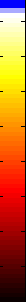}}%
}%
\def\optl{0.99}%
\draw[text=black] (\optxr,\optya) node[right] {{\footnotesize $0$}};%
\draw[text=black] (\optxr,\optya+\optl) node[right] {{\footnotesize $50$}};%
\draw[text=black] (\optxr,\optya+2*\optl) node[right] {{\footnotesize $100$}};%
\draw[text=black] (\optxr,\optya+3*\optl) node[right] {{\footnotesize $150$}};%
\draw[text=black] (\optxr,\optya+4*\optl) node[right] {{\footnotesize $200$}};%
\draw[text=black] (\optxr,\optya+5*\optl) node[right] {{\footnotesize $250$}};%
\draw[text=black] (\optxr,\optya+6*\optl) node[right] {{\footnotesize $300$}};%
\draw[text=black] (\optxr,\optya+7*\optl) node[right] {{\footnotesize $350$}};%
\draw[text=black] (\optxr,\optya+8*\optl) node[right] {{\footnotesize $400$}};%
\end{tikzpicture}%
}%
\addtocounter{subfigure}{-1}%
\subfloat[phase reconstruction $\phi_{14}$\label{sfig-constphaserec}]{%
\begin{tikzpicture}[scale = \fscb]%
\pgftext[]{%
\rotatebox{0}{\includegraphics[scale=\fsca]{./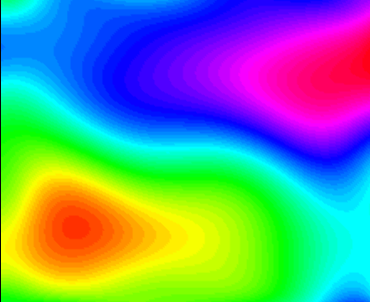}}%
}%
\draw[text=black] (\optxr,\optya) node[right] {{\footnotesize \vphantom{$0$}}};%
\end{tikzpicture}%
}%
\subfloat{%
\begin{tikzpicture}[scale = \fscb]%
\pgftext[]{%
\rotatebox{0}{\includegraphics[scale=\fsca]{./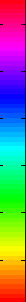}}%
}%
\draw[text=black] (\optxr,\optya) node[right] {{\footnotesize \vphantom{$0$}}};%
\def\optl{1.345}%
\def\optya{-3.1}%
\draw[text=black] (\optxr,\optya) node[right] {{\footnotesize $-2$}};%
\draw[text=black] (\optxr,\optya+\optl) node[right] {{\footnotesize $-1$}};%
\draw[text=black] (\optxr,\optya+2*\optl) node[right] {{\footnotesize $0$}};%
\draw[text=black] (\optxr,\optya+3*\optl) node[right] {{\footnotesize $1$}};%
\draw[text=black] (\optxr,\optya+4*\optl) node[right] {{\footnotesize $2$}};%
\draw[text=black] (\optxr,\optya+5*\optl) node[right] {{\footnotesize $3$}};%
\end{tikzpicture}%
}%
\addtocounter{subfigure}{-1}%
\subfloat[EM-TV object reconstruction\label{sfig-consttvrec}]{%
\begin{tikzpicture}[scale = \fscb]%
\pgftext[]{%
\rotatebox{0}{\includegraphics[scale=\fsca]{./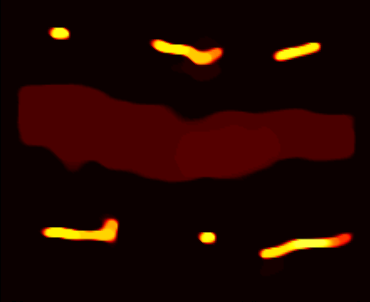}}%
}%
\draw[text=black] (\optxr,\optya) node[right] {{\footnotesize \vphantom{$0$}}};%
\end{tikzpicture}%
}%
\subfloat{%
\begin{tikzpicture}[scale = \fscb]%
\pgftext[]{%
\rotatebox{0}{\includegraphics[scale=\fsca]{./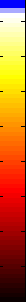}}%
}%
\def\optl{0.99}%
\draw[text=black] (\optxr,\optya) node[right] {{\footnotesize $0$}};%
\draw[text=black] (\optxr,\optya+\optl) node[right] {{\footnotesize $50$}};%
\draw[text=black] (\optxr,\optya+2*\optl) node[right] {{\footnotesize $100$}};%
\draw[text=black] (\optxr,\optya+3*\optl) node[right] {{\footnotesize $150$}};%
\draw[text=black] (\optxr,\optya+4*\optl) node[right] {{\footnotesize $200$}};%
\draw[text=black] (\optxr,\optya+5*\optl) node[right] {{\footnotesize $250$}};%
\draw[text=black] (\optxr,\optya+6*\optl) node[right] {{\footnotesize $300$}};%
\draw[text=black] (\optxr,\optya+7*\optl) node[right] {{\footnotesize $350$}};%
\draw[text=black] (\optxr,\optya+8*\optl) node[right] {{\footnotesize $400$}};%
\end{tikzpicture}%
}%
\addtocounter{subfigure}{-1}%
\end{center}%
\caption{Panels \subref{sfig-constobject} and \subref{sfig-constphiex} show a simulated object and phase from which noisy 4Pi data was created (panel \subref{sfig-constdata}). Panels \subref{sfig-constobjrec} depicts the object reconstruction obtained with the IRGNM and panel \subref{sfig-constphaserec} the corresponding phase reconstruction.  Panel \subref{sfig-consttvrec} shows a reconstruction by an expectation maximization algorithm with TV penalty using the 
reconstructed phase \subref{sfig-constphaserec}.%
 \label{fig-const}}%
\end{figure}%
\captionsetup[subfloat]{margin=0ex}

In Figure \ref{fig-const} we chose an object which is constant in a region, 
and hence the data carry no information on the phase there. 
Due to the $H^{2}$-phase penalty term, the phase is interpolated smoothly
in this area and recovered quite well, except in dark areas close to the 
boundary. In contrast, the reconstruction of the object exhibits a grainy 
structure in the central area. This is a consequence of choosing the $L^2$ 
norm as object penalty. Since we have found a good approximation 
$\phi_{\rm app}$ of the phase, we can compute a better reconstruction of the 
object in a second step by solving an inverse problem
for the linear operator $f\mapsto F(f,\phi_{\rm app})$.   
The result in Figure \ref{sfig-constphaserec} was computed
using an expectation-maximization method with a TV penalty term
and Bregman iterations as described in \cite{BSB09}.

Figure \ref{fig-3d-dat} shows cuts through 3-dimensional experimental data. 
The corresponding reconstructions of object and phase are shown in Figure \ref{fig-3d-recs}. 
Note that due to the simultaneous reconstruction of the phase function the 
non-symmetric sidelobes in the data have been removed in the object reconstruction. %
\def\fsca{0.8}
\def\foaxs{2.5}%
\def\foays{-5.5}%
\def\foaxe{4.6}%
\def\foaye{-4.6}%
\def\fayx{-2}%
\def\fayy{-1.8}%
\def\fazx{-6.5}%
\def\fazy{2.5}%
\begin{figure}[htbp]%
\def\fsccb{1.16}
\def\fscainv{1/0.412}%
\def\tikzhc{-1.8}%
\def\xopt{-0.3}%
\def\optx{\xopt}
\def\optya{-1.7}%
\def\fcbcor{0.06}%
\def\fsfsp{3}%
\begin{center}%
\begin{tikzpicture}[scale = \fsca]%
\pgftext[]{%
\rotatebox{0}{\includegraphics[scale=1]{./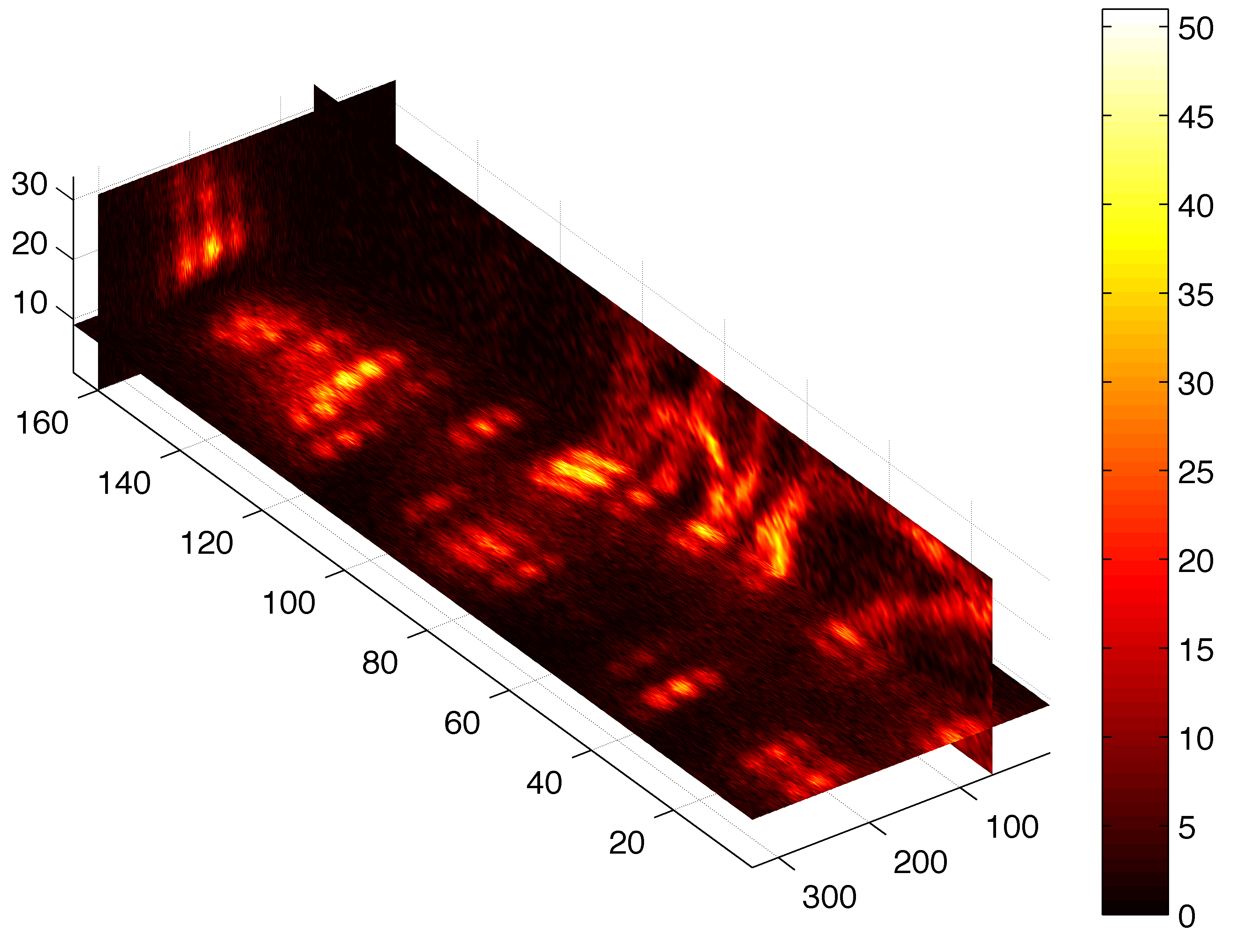}}%
}
\draw[color=black,line width=1pt,->,-latex] plot coordinates{(\foaxs,\foays) (\foaxe,\foaye)};%
\draw[text=black] (\foaxs+1.1,\foays+0.8) node[below] {\rotatebox{24}{\footnotesize optical axis}};%
\draw[text=black] (\foaxs+1.1,\foaye+0.8) node[below] {\rotatebox{24}{\footnotesize x}};%
\draw[text=black] (\fayx,\fayy) node[below] {\rotatebox{-44}{\footnotesize y}};%
\draw[text=black] (\fazx,\fazy) node[below] {\rotatebox{0}{\footnotesize z}};%
\end{tikzpicture}%
\end{center}%
\caption{Data of microtubules in a Vero cell, for ${\rm NA}=1.34$, $\lambda_{\rm ex}=635{\rm nm}$, $\lambda_{\rm em}= 680 {\rm nm}$. The data extension is $(2952 {\rm nm} \times 9296 {\rm nm}\times 1904 {\rm nm})$ in $x,y$ and $z$ direction respectively. The annotations at the axes number the voxels in the respective dimension.}\label{fig-3d-dat}%
\end{figure}
Including the zero padding, the data contained approximately $2$ million voxels. The phase has been approximated by Chebychev polynomials of ma\-xi\-mal degree $3$ in each dimension. %
\begin{figure}[htbp]%
\def\fsca{0.6}
\def\fsccb{1.16}
\def\fscainv{1/0.412}%
\def\tikzhc{-1.8}%
\def\xopt{-0.3}%
\def\optx{\xopt}
\def\optya{-1.7}%
\def\fcbcor{0.06}%
\def\fsfsp{3}%
\begin{center}%
\subfloat[object reconstruction\label{sfig-3d-obj}]{
\begin{tikzpicture}[scale = \fsca]%
\pgftext[]{%
\rotatebox{0}{\includegraphics[scale=1]{./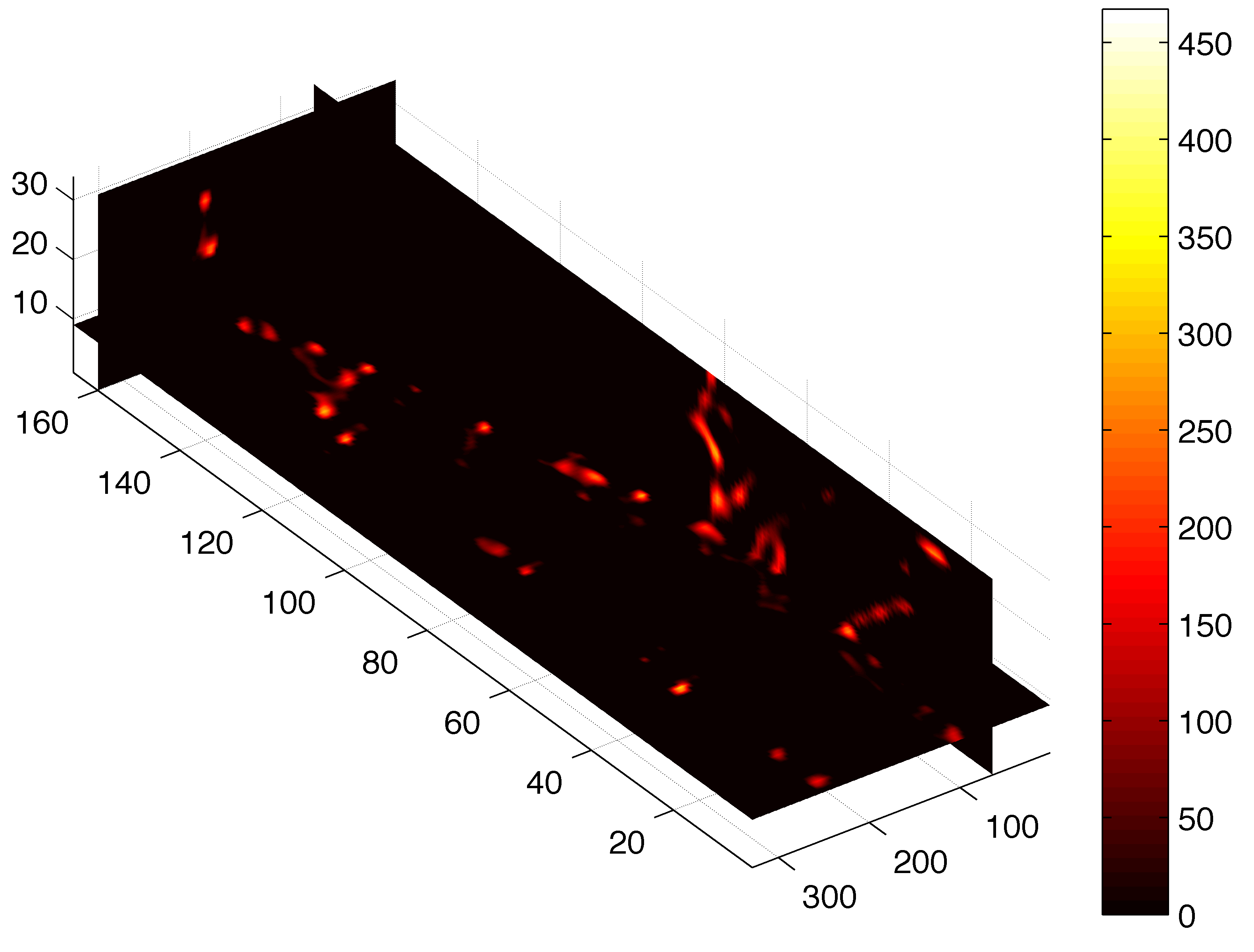}}%
}
\draw[color=black,line width=1pt,->,-latex] plot coordinates{(\foaxs,\foays) (\foaxe,\foaye)};%
\draw[text=black] (\foaxs+1.1,\foays+0.8) node[below] {\rotatebox{24}{\footnotesize optical axis}};%
\draw[text=black] (\foaxs+1.1,\foaye+0.8) node[below] {\rotatebox{24}{\footnotesize x}};%
\draw[text=black] (\fayx,\fayy) node[below] {\rotatebox{-44}{\footnotesize y}};%
\draw[text=black] (\fazx,\fazy) node[below] {\rotatebox{0}{\footnotesize z}};
\end{tikzpicture}%
}%
%
%
%
\subfloat[phase reconstruction\label{sfig-3d-phase}]{%
\begin{tikzpicture}[scale = \fsca]%
\pgftext[]{%
\rotatebox{0}{\includegraphics[scale=1]{./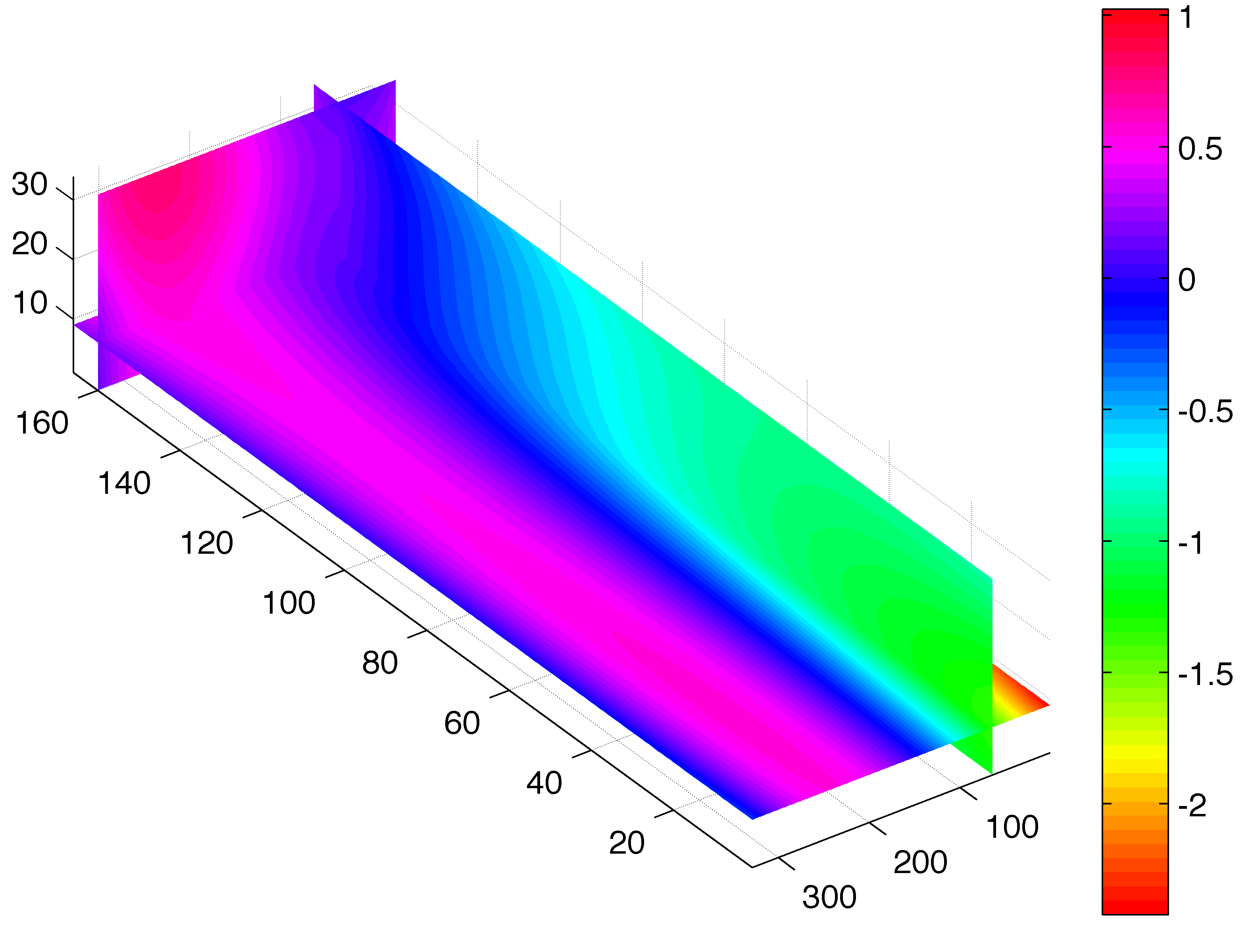}}%
}
\draw[color=black,line width=1pt,->,-latex] plot coordinates{(\foaxs,\foays) (\foaxe,\foaye)};%
\draw[text=black] (\foaxs+1.1,\foays+0.8) node[below] {\rotatebox{24}{\footnotesize optical axis}};%
\draw[text=black] (\foaxs+1.1,\foaye+0.8) node[below] {\rotatebox{24}{\footnotesize x}};%
\draw[text=black] (\fayx,\fayy) node[below] {\rotatebox{-44}{\footnotesize y}};%
\draw[text=black] (\fazx,\fazy) node[below] {\rotatebox{0}{\footnotesize z}};%
\end{tikzpicture}%
}%
\end{center}%
\caption{Panel \subref{sfig-3d-obj} shows the reconstruction of the object from the data shown in Figure \ref{fig-3d-dat}. In Panel \subref{sfig-3d-phase} the corresponding reconstruction of the phase is depicted.\label{fig-3d-recs}}%
\end{figure}

\FloatBarrier
\section*{Acknowledgments}
We would like thank Andreas Sch\"onle and Giuseppe Vicidomini (MPI biophysical chemistry, G\"ottingen) for their support concerning microscopic data and 
information on 4Pi-microscopy. We further thank Jan Keller for his matlab code 
implementing the 4Pi-psf model \eqref{eq:accurate4PiPSF}. 
Finally, we gratefully acknowledge financial support by BMBF 
(German Federal Ministry for Education and Science) under the project INVERS.
\bibliography{paper}
\bibliographystyle{abbrv}
\end{document}